\newtheorem{theorem}{Theorem}[section]
\newtheorem{lemma}[theorem]{Lemma}
\newtheorem{proposition}[theorem]{Proposition}
\newtheorem{remark}[theorem]{Remark}
\def \Real{\mathbb{R}}
\def \bfA{{\mathbf A}}
\def \bfB{{\mathbf B}}
\def \bfC{{\mathbf C}}
\def \bfD{{\mathbf D}}
\def \bfE{{\mathbf E}}
\def \bfF{{\mathbf F}}
\def \bfH{{\mathbf H}}
\def \bfI{{\mathbf I}}
\def \bfK{{\mathbf K}}
\def \bfL{{\mathbf L}}
\def \bfM{{\mathbf M}}
\def \bfN{{\mathbf N}}
\def \bfV{{\mathbf V}}
\def \bfX{{\mathbf X}}
\def \bfv{{\mathbf v}}
\def \bfzero{{\mathbf 0}}
\def \bfa{{\mathbf a}}
\def \bfb{{\mathbf b}}
\def \bfc{{\mathbf c}}
\def \bfd{{\mathbf d}}
\def \bff{{\mathbf f}}
\def \bfn{{\mathbf n}}
\def \bft{{\mathbf t}}
\def \bfu{{\mathbf u}}
\def \bfv{{\mathbf v}}
\def \bfw{{\mathbf w}}
\def \bfx{{\mathbf x}}
\def \bfgamma{{\mathbf{\gamma}}}
\def \bfudot{\dot{\bfu}}
\def \bfetaddot{\ddot{\bfeta}}
\def \pdot{\dot{p}}
\def \bfetadot{\dot{\bfeta}}
\def \bfalpha{\mbox{\boldmath $\alpha$}}
\def \bfbeta{\mbox{\boldmath $\beta$}}
\def \bfsigma{\mbox{\boldmath $\sigma$}}
\def \bfSigma{\mbox{\boldmath $\Sigma$}}
\def \bfgamma{\mbox{\boldmath $\gamma$}}
\def \bftheta{\mbox{\boldmath $\theta$}}
\def \bfeta{\mbox{\boldmath $\eta$}}
\def \bfxi{\mbox{\boldmath $\xi$}}
\def \bfphi{\mbox{\boldmath $\phi$}}
\def \calC{\mathcal{C}}
\def \calD{\mathcal{D}}
\def \bfcalD{\mathbf{\calD}}
\def \calF{\mathcal{F}}
\def\calN{\mathcal{N}}
\def \rmspan{\mathrm{span}}
\def \Lbb{\mathbb{L}}
\def \Hbb{\mathbb{H}}
\def \dive{{\rm div}}
\begin{document}

\begin{frontmatter}



\title{Analysis of the coupled Navier-Stokes/Biot problem}

\author{Aycil Cesmelioglu}
\address{Oakland University, Department of Mathematics and Statistics, 146 Library Drive, Rochester, MI 48309}

\ead{cesmelio@oakland.edu.}

\begin{abstract}
We analyze a weak formulation of the coupled  problem defining the interaction between a free fluid and a poroelastic structure. The problem is fully dynamic and is governed by the time-dependent incompressible Navier-Stokes equations and the Biot equations. Under a small data assumption, existence and uniqueness results are proved and a priori estimates are provided. 
\end{abstract}

\begin{keyword}
Navier-Stokes \sep Darcy \sep Biot \sep poroelastic \sep weak formulation \sep existence


35Q30 \sep 35Q35
\end{keyword}

\end{frontmatter}


\section{Introduction.}
We consider a fully dynamic model for the interaction of an incompressible Newtonian fluid with a poroelastic material where the boundary is assumed to be fixed. The fluid flow is governed by the time-dependent incompressible Navier-Stokes equations. For the poroelastic material we use the Biot system with appropriate flow and stress couplings on the interface between the fluid and the poroelastic regions. This problem is a fully dynamic coupled system of mixed hyperbolic-parabolic type and inherits all the difficulties mathematically and numerically involved in the standard fluid-structure interaction and Stokes/Navier-Stokes-Darcy coupling. 

The literature is rich in works on related coupled problems and here we provide only a partial list of relevant publications. One related problem deals with the interaction of an incompressible fluid with a porous material and modeled by the coupling of the Stokes/Navier-Stokes equations to the Darcy equations. The steady-state case of this problem is analyzed mathematically in \cite{GR2009, DQ2009, BDQ2010} and the time-dependent case in \cite{CR2008, CGHW2010, CGR2013}. 

Another related problem is that of the fluid-structure interaction. 
The analysis of a weak solution for the time dependent coupling of the Stokes and the linear elasticity equations is discussed in \cite{DGHL2003} and for the flow of a coupling of time-dependent 2D incompressible NSE to the linearly viscoelastic or the linearly elastic Koiter shell in \cite{MuhaCanic2013}. The two layered structure version of this problem is discussed in \cite{MuhaCanic2014, MuhaCanic2016}. 

In geosciences, aquifers and oil/gas reservoirs are porous and deformable affecting groundwater and oil/gas flow, respectively \cite{MJR2005, LDQ2011, GMSWW2014}. In biomedical sciences, blood flow is influenced by the porous and deformable nature of the arterial wall \cite{BQQ2009, BGN2014, BYZ2015}. Therefore, mathematical models that are used to simulate these flow problems must account for both the effects of porosity and elasticity. The Navier-Stokes/Biot system is investigated numerically in \cite{BQQ2009} using a monolithic and a domain decomposition technique and the Stokes/Biot system is investigated in \cite{BYZZ2015} using an operator splitting approach and in \cite{CLQWY2016} using an optimization based decoupling strategy. In \cite{Showalter2005}, variational formulations for the the Stokes/Biot system are developed using semi-group methods. Also a two-layered version was studied in \cite{BYZ2015}.

In this paper, we focus on the coupling of the fully dynamic incompressible Navier-Stokes equations (for the free fluid) with the Biot system (for the poroelastic structure completely saturated with fluid). This coupled problem is the nonlinear version of the problem presented in \cite{BYZZ2015, CLQWY2016}. We construct a weak formulation and show the existence and uniqueness (local) of its solution under small data assumption. We note that this small data assumption is not needed if the fluid is represented by the linear Stokes equations rather than the Navier-Stokes equations and the result would also be global. We assume that the boundaries and the interface between the fluid and the poroelastic material are fixed. The proof proceeds by constructing a semi-discrete Galerkin approximations, obtaining the necessary a priori estimates, and passing to the limit. To the author's knowledge, there is no such analysis for this fully dynamic nonlinear coupled system.

The outline of this paper is as follows: In Section 2, we introduce the equations governing the problem and appropriate interface, boundary and initial conditions. The next section is devoted to notation and some well-known results that are used in the forthcoming sections. Section 4 sets the assumptions on data, presents the weak formulation and shows that it is equivalent to the problem. Section 5 summarizes the main result of the paper. Section 6 contains the proof of the existence and uniqueness results and a priori estimates for the weak solution.

\section{Fluid-poroelastic model equations}\label{sec: model}
\setcounter{equation}{0}
Let $\Omega\subset \mathbb{R}^d$, $d=2,3$ be an open bounded domain with Lipschitz continuous boundary $\partial \Omega$. The domain $\Omega$ is made up of two regions $\Omega_f$, the fluid region, and
$\Omega_p$, the poroelastic region, separated by a common interface
$\Gamma_{I} = \partial \Omega_f \cap \partial \Omega_p$. Both $\Omega_1$ and $\Omega_2$ are assumed to be Lipschitz. See Figure ~\ref{fig:domain}.
The first region $\Omega_f$ is occupied by a free fluid and has boundary $\Gamma_f$ such that
$\Gamma_f=\Gamma^{in}_f \cup \Gamma^{out}_f \cup \Gamma^{ext}_f
\cup \Gamma_{I}$, where $\Gamma^{in}_f$  and $\Gamma^{out}_f$ represent the inlet and outlet boundary, respectively.
The second region $\Omega_p$ is occupied by a saturated poroelastic structure with boundary $\Gamma_p$ such that $\Gamma_p=\Gamma_p^{s}\cup \Gamma^{ext}_p
\cup \Gamma_{I}$, where $\Gamma^{s}_p\cup \Gamma^{ext}_p$ represents the outer structure boundary.

\begin{figure}[ht!]
\begin{center}
\includegraphics[scale=.25]{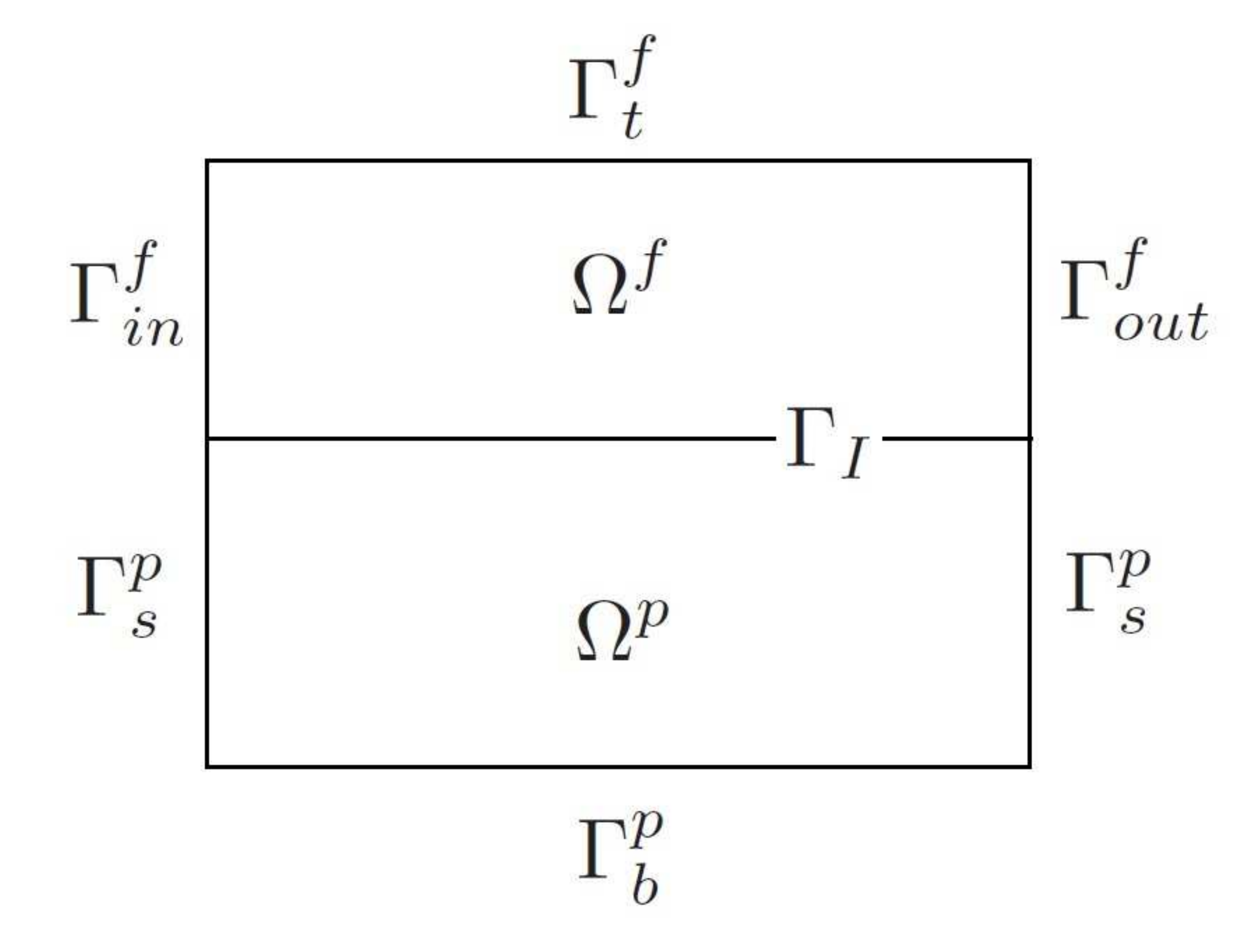}
\caption{Fluid-poroelastic domain}
\label{fig:domain}
\end{center}
\end{figure}

Fluid flow is governed by the time-dependent incompressible Navier-Stokes equations:
\begin{subequations}\label{NSE}
\begin{align}
\rho_f  \bfudot_f
 -2  \mu_f\, \nabla \cdot \bfD(\bfu_f) + \rho_f\bfu_f\cdot\nabla\bfu_f+ \nabla
p_f &= \bff_f  \quad \mbox{in } \Omega_f \times (0,T)\,, \label{moment}
\\[1.5ex]
\nabla \cdot \; \bfu_f & = 0  \quad \mbox{in } \Omega_f\times (0,T) \,. \label{mass}
\end{align}
\end{subequations}
Here $\bfu_f$ denotes the
velocity vector of the fluid, $p_f$ the pressure of the fluid, $\rho_f$ the density
of the fluid, $\mu_f$ the constant fluid viscosity, and $\bff_f$ the body force acting on the fluid. We used the dot above a symbol to denote the time derivative.
The strain rate tensor $\bfD(\bfu_f)$ is defined by:
\begin{equation*}
\bfD(\bfu_f) = \frac{1}{2}\left( \nabla \bfu_f + (\nabla \bfu_f)^T \right ).
\end{equation*}
The Cauchy stress tensor is given by:
\begin{align}
\bfsigma_f =  2\mu_{f} D(\bfu_f)  - p_f \bfI. 
\end{align}
So \eqref{moment} can also be written as
\[
\rho_f  \bfudot_f
 -\nabla \cdot \bfsigma_f + \rho_f\bfu_f\cdot\nabla\bfu_f = \bff_f.
\]
Equation (\ref{moment}) represents the conservation of linear momentum, while equation (\ref{mass}) is the incompressibility condition that represents the conservation of mass.

The poroelastic system is a fully dynamic coupled system of mixed hyperbolic-parabolic
type represented by the Biot model \cite{Biot1941, Biot1955}:
\begin{subequations}\label{biot}
\begin{align}
\rho_p  \bfetaddot
 -2 \mu_s \, \nabla \cdot \bfD(\bfeta)  - \lambda_s \nabla (\nabla \cdot \bfeta)  + \alpha \nabla p_p &= \bff_s \quad \mbox{in } \Omega_p\times (0,T) \,, \label{structure}
 \\[1.5ex]
(s_0 \pdot_p+ \alpha  \nabla \cdot \bfetadot ) -\nabla \cdot \bfK\nabla p_p
&=  f_p \quad \mbox{in } \Omega_p\times (0,T) \,, \label{mass1}
 \end{align}
 \end{subequations}
where $\bfeta$ is the displacement of the structure, $p_p$ is the pore pressure of the fluid, and $\bfu_p$ is the fluid velocity in the pores. Here, $f_p$ is the source/sink term and $\bff_s$ is the body force. The parameters $\nu_s$ and $\lambda_s$ denote the Lam\'e constants for the solid skeleton.
The density of the saturated medium is denoted by $\rho_p$, and the hydraulic conductivity by $\bfK$. In the Biot model, the first equation, \eqref{structure}, is the momentum equation for the balance of forces and the second equation, \eqref{mass1}, is the diffusion equation of fluid mass.
The total stress tensor for the poroelastic structure is given by:
\begin{align*}
\bfsigma_p = \bfsigma_p^E - \alpha \, p_p \bfI,
\end{align*}
where $\bfsigma_p^E$ is the elasticity stress tensor defined by $\bfsigma_p^E = 2 \mu_s \bfD(\bfeta) + \lambda_s (\nabla \cdot \bfeta )\bfI$.
Therefore, \eqref{structure} can also be written as
\[
\rho_p  \bfetaddot
-\nabla\cdot \bfsigma_p= \bff_s.
\]
The constrained specific storage coefficient is denoted by $s_0$ and the Biot-Willis constant by $\alpha$, the latter is usually close to unity. In the subsequent discussion, we assume that the motion of the structure is small enough so that the domain is fixed at its reference position. All the physical parameters are assumed to be constant in space and time. Next, we prescribe boundary, interface and initial conditions where $\bfn_f$ and $\bfn_p$ denote the outward unit normal vectors of $\Omega_f$ and $\Omega_p$, respectively and $\bfn_{\Gamma}$ is the unit normal vector of the interface $\Gamma_I$ pointing from $\Omega_f$ to $\Omega_p$. Hence, $\bfn_{\Gamma_I}=\bfn_f|_{\Gamma_I}=-\bfn_p|_{\Gamma_I}$. Furthermore $\bft_{\Gamma}^l$, $l=1,\hdots,d-1$ denotes an orthonormal set of unit vectors on the tangent plane to $\Gamma_I$.
\subsubsection*{Boundary conditions:}
Since the boundary conditions have no significant effect on the fluid poroelastic interaction, for simplicity they are chosen such that the normal fluid stress is prescribed on the inlet and outlet boundaries, the poroelastic structure is assumed to be fixed at the inlet and outlet boundaries and have zero tangential displacement on the external structure boundary, that is, 
\begin{subequations}\label{boundry}
\begin{alignat}{3}
&\mbox{ on } \, \Gamma_f^{in}\times (0,T) &,&\hspace{2cm} \bfsigma_f \bfn_f   = - P_{in}(t)\bfn_f , \label{bc_f_in} \\[1.5ex]
 &\mbox{ on }  \, \Gamma_f^{out}\times (0,T) &,&\hspace{2cm} \bfsigma_f  \bfn_f =  \,\bfzero, \label{bc_f_out} \\[1.5ex]
&\mbox{ on }  \, \Gamma_f^{ext}\times (0,T) &,&\hspace{2cm} \bfu_f  =   \,\bfzero
 \,, \label{bc_f_ext} \\[1.5ex]
&\mbox{ on }  \,\Gamma_p^{s}\times (0,T)&,&\hspace{2cm} \bfK\nabla p_p\cdot \bfn_p=0, \; \bfeta =  \,\bfzero, \label{bc_p_s} \,  \\[1.5ex]
&\mbox{ on } \, \Gamma_p^{ext}\times (0,T) &,&\hspace{2cm}
\left\{ 
\begin{array}{c}p_p=0, \; \bfn_p\cdot \bfsigma_p^E  \bfn_p = \, \bfzero,
 \label{bc_p_ext} \,\\
 \;\bft_p^l\cdot \bfeta=0, \,1\leq l\leq d-1.
 \end{array}
 \right.\quad && 
\end{alignat}
\end{subequations}
\subsubsection*{Initial conditions:} 
As initial conditions, we assume that everything is at rest in the beginning.
\begin{equation}\label{initial}
\text{At}~ t=0: \quad \bfu_f =  \bfzero,  \quad  {p_p} = 0, \quad  \bfeta=\bfzero, \quad \dot{\bfeta}=\bfzero \,.
\end{equation}
\subsubsection*{Interface conditions on $\Gamma_I\times (0,T)$:}
The interface conditions are given by 
\begin{subequations}\label{interface}
\begin{align}
\bfu_f \cdot \bfn_{\Gamma} & =  (\bfetadot -\bfK\nabla p_p) \cdot \bfn_{\Gamma}\,, \label{inter1} \\[1.5ex]
\bfsigma_f  \bfn_{\Gamma} & =   \bfsigma_p  \bfn_{\Gamma}
 \,, \label{inter2} \\[1.5ex]
 \bfn_{\Gamma} \cdot \bfsigma_f  \bfn_{\Gamma} & =  - p_p
 \,, \label{inter3} \\[1.5ex]
\bfn_{\Gamma} \cdot \bfsigma_f  \bft_{\Gamma}^l & =  - \beta (\bfu_f - \bfetadot)
 \cdot \bft_{\Gamma}^l  \, ,\,1\leq l\leq d-1,  \label{inter4}
\end{align}
\end{subequations}
where $\beta$ denotes the resistance parameter in the tangential direction.
Condition \eqref{inter1} is the continuity of normal flux that satisfies mass conservation, condition \eqref{inter2} is the balance of stresses, that is, the total stresses of the fluid and the poroelastic medium must match at the interface. Condition \eqref{inter3} guarantees the balance of normal components of the stress in the fluid phase across the interface. Finally condition \eqref{inter4} is the Beavers-Joseph-Saffman condition \cite{BJ, Saffman, Jager} which assumes that the tangential stress of the fluid is proportional to the slip rate. More details on the interface conditions can be found in \cite{MW2012, Showalter2005}. 
\section{Notation and useful results.}
Let $\Omega\subset \Real^d$, $d=2, 3$ be a bounded, open connected domain with a Lipschitz continuous boundary $\partial \Omega$. Let $\calD(\Omega)$ be the space of all infinitely differentiable functions with compact support in $\Omega$. For $s\in \mathbb{R}$, $H^s(\Omega)$ denotes the standard Sobolev space of order $s$ equipped with its standard seminorm $|\cdot|_{s,\Omega}$ and norm $\|\cdot\|_{s,\Omega}$.
We denote the vector- and matrix-valued Sobolev spaces as follows:
\[
\bfH^s(\Omega):=[H^s(\Omega)]^d,\quad \Hbb^s(\Omega):=[H^s(\Omega)]^{d\times d}
\]
and still write $|\cdot|_{s,\Omega}$ and $\|\cdot\|_{s,\Omega}$ for the corresponding seminorms and norms.
When $s=0$, instead of $H^0(\Omega), \bfH^0(\Omega)$ and $\Hbb^0(\Omega)$ we write $L^2(\Omega), \bfL^2(\Omega)$ and $\Lbb^2(\Omega)$ and instead of $|\cdot|_{0,\Omega}$ and $\|\cdot\|_{0,\Omega}$ we write $|\cdot|_{\Omega}$ and $\|\cdot\|_{\Omega}$. The scalar product of $L^2(\Omega)$ is denoted by $(\cdot, \cdot)_{\Omega}$.
The spaces $\bfH(\dive;\Omega)=\{\bfv\in \bfL^2(\Omega): \nabla \cdot\bfv\in L^2(\Omega)\}$ and $\bfH^{3/2}({\dive};\Omega)=\{\bfv\in \bfL^2(\Omega): \nabla \cdot\bfv\in L^{3/2}(\Omega)\}$ are equipped with the graph norm.
If $\Gamma\subset \partial \Omega$, and $v\in H^{1/2}(\Gamma)$, we define
the extension $\tilde{v}$ of $v$ as $\tilde{v}=v$ on $\Gamma$, $\tilde{v}=0$ on $\partial \Omega\backslash\Gamma$ and define the space of traces of all functions of $H^1(\Omega)$ that vanish on $\partial \Omega\backslash \Gamma$ as follows:
\[ 
H_{00}^{1/2}(\Gamma)=\{v\in L^2(\Gamma): \tilde{v}\in H^{1/2}(\partial \Omega)\}.
\]   
We also define for any $1\leq r\leq \infty$,
\[
L^r(a, b;X)=\{f \mbox{ measurable in }: \|f\|_{L^r(a,b;X)}<\infty\}
\]
equipped with the norm
\[
\|f\|_{L^r(a,b;X)}=\left(\int_a^b\|f(t)\|^r_X dt\right)^{1/r}
\]
for $1\leq r<\infty$ and 
\[
\|f\|_{L^{\infty}(a,b;X)}={\rm ess} \sup_{t\in [a,b]}\|f(t)\|_X.
\]
Furthermore, 
$
\calC(0,T;X)
$ denotes the set of all functions that are continuous into $X$ and finally we define 
\[
H^1(a,b;X)=\{f\in L^2(a,b;X): \dot{f}\in L^2(a,b;X)\}.
\]
\subsubsection{Useful results.} \label{usefulresults}
Here we state inequalities and results to be used throughout the paper. More details can be found in \cite{Adams1978, EG2004}. 
We define the following spaces for the weak solution:
\begin{align*}
\bfX_f&=\{\bfv\in \bfH^1(\Omega_f):\, \bfv=\bfzero \mbox{ on }\Gamma_f^{ext} \},\\
\bfV_f&=\{\bfv\in \bfX_f:\, \nabla\cdot \bfv=0 \},\\
\bfX_p&=\{\bfxi\in \bfH^1(\Omega_p): \,\bfxi=\bfzero \mbox{ on }\Gamma_p^{s}, \,\bft_p^l\cdot \bfxi=0, \, 1\leq l\leq d-1 \mbox{ on }\Gamma_p^{ext} \},\\
Q_f&=L^2(\Omega_f),\\
Q_p&=\{r\in H^1(\Omega_p): \, r=0 \mbox{ on }\Gamma_p^{ext}\}.
\end{align*}
On these spaces, we have the following trace inequalities: 
\begin{alignat}{2}
&\forall \bfv\in \bfX_f, \quad\|\bfv\|_{\Gamma_I}&\leq&\, T_1|\bfv|_{1,\Omega_f} \quad \|\bfv\|_{\Gamma_f^{in}}\leq T_2|\bfv|_{1,\Omega_f}\label{trace1}\\
&\forall q\in Q_p,\quad \|q\|_{H^{1/2}(\Gamma_I)}&\leq&\, T_3|q|_{1,\Omega_p}, \quad \|q\|_{\Gamma_p^s}\leq T_4|q|_{1,\Omega_p}, \label{trace2}
\\&\forall \bfv\in \bfX_p, \quad\|\bfeta\|_{\Gamma_I}&\leq&\, T_{5}|\bfeta|_{1, \Omega_p},\label{trace3}
\end{alignat}
the Poincar\'{e} inequalities:
\begin{alignat}{2}
&\forall \bfv\in \bfX_f, \quad\|\bfv\|_{\Omega_f}&\leq&\, P_1|\bfv|_{1, \Omega_f}, \label{poincare1}\\
&\forall \bfeta\in \bfX_p, \quad\|\bfeta\|_{\Omega_p}&\leq&\, P_2|\bfeta|_{1, \Omega_p},\label{poincare2}\\
&\forall q\in Q_p,\quad \|q\|_{\Omega_p}&\leq&\, P_3|q|_{1, \Omega_p}, \label{poincare3}
\end{alignat}
a Sobolev inequality: 
\begin{equation}
\forall \bfv\in \bfX_f, \quad \|\bfv\|_{\bfL^4(\Omega_f)}\leq S_f|\bfv|_{\bfH^1(\Omega_f)}, \label{sobolev}
\end{equation}
and finally a Korn's inequality:
\begin{equation}
\forall \bfv\in \bfX_f, \quad |\bfv|_{1,\Omega_f}\leq K_f\|\bfD(\bfv)\|_{\Omega_f},\label{korn}
\end{equation}
where $T_1-T_5$, $P_1-P_3$, $S_f$ and $K_f$ are positive constants depending only on their corresponding domain.
\begin{theorem}[Gronwall's inequality in integral form]\cite{Gronwall}
Let
\begin{equation}\label{Gronwall}
\zeta(t)\leq B+C\int_0^t\zeta(s)ds, \mbox{ a.e. } t \in (0,T)
\end{equation}
where $\zeta$ is a continuous nonnegative function and $B, C\geq 0$ are constants.   Then
\[
\zeta(t)\leq B e^{Ct}.
\]
\end{theorem}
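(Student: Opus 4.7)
The plan is to reduce the implicit integral inequality to a linear differential inequality for an auxiliary function and then eliminate the right-hand side via an integrating factor. I would set
\[
\phi(t) := B + C\int_0^t \zeta(s)\,ds, \qquad t\in[0,T].
\]
Because $\zeta$ is continuous, the fundamental theorem of calculus gives $\phi\in\calC^1([0,T])$ with $\phi(0)=B$ and $\phi'(t)=C\zeta(t)$. The hypothesis \eqref{Gronwall} reads $\zeta(t)\leq\phi(t)$ for almost every $t$, and since both $\zeta$ and $\phi$ are continuous, this inequality in fact holds for every $t\in[0,T]$. Consequently the problem reduces to proving $\phi(t)\leq Be^{Ct}$.

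Substituting $\zeta\leq\phi$ into the formula for $\phi'$ produces the pointwise differential inequality $\phi'(t)\leq C\phi(t)$. The key step is to multiply by the integrating factor $e^{-Ct}$, which rewrites this as
\[
\frac{d}{dt}\bigl(\phi(t)e^{-Ct}\bigr) \;=\; e^{-Ct}\bigl(\phi'(t)-C\phi(t)\bigr) \;\leq\; 0.
\]
Integrating from $0$ to $t$ then yields $\phi(t)e^{-Ct}\leq\phi(0)=B$, i.e.\ $\phi(t)\leq Be^{Ct}$, and combining with $\zeta\leq\phi$ finishes the argument.

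I do not anticipate any substantive obstacle; the proof is essentially routine. The only point requiring mild care is the transition from the a.e.\ hypothesis to a pointwise bound, which is handled by the continuity of $\zeta$ together with the absolute continuity (in fact $\calC^1$-regularity) of $\phi$. The degenerate cases $B=0$ (which forces $\zeta\equiv 0$) and $C=0$ (which yields $\zeta\leq B$) also drop out of the same computation without modification.
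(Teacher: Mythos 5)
Your argument is correct and is the standard integrating-factor proof of Gronwall's inequality; the paper itself states this result with a citation and does not supply its own proof, so there is nothing to diverge from. The auxiliary function $\phi$, the reduction to $\phi'\leq C\phi$, and the nonnegativity of $C$ (needed when you pass from $\zeta\leq\phi$ to $C\zeta\leq C\phi$) are all handled properly, and your remark about upgrading the a.e.\ inequality to a pointwise one via continuity is exactly the right point of care.
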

The next theorem is a compactness result that provides a strong convergence result which  is used to pass to the limit in the nonlinear terms of the Galerkin solution.
\begin{theorem}\cite[Corollary 4]{Simon}\label{Simon}
Let $X, B$ and $Y$ be Banach spaces such that $X\subset B\subset Y$ where the imbedding of $X$ into $B$ is compact. Let $F$ be a bounded set in $L^p(0,T;X)$ where $1\leq p< \infty$ and let the set $\{\frac{\partial f}{\partial t}\}_{f\in F}$ be bounded in $L^1(0,T;Y)$. Then $F$ is relatively compact in $L^p(0,T;B)$.
\end{theorem}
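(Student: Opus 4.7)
The plan is to derive this compactness result from the Kolmogorov--Riesz--Fréchet characterization of relatively compact subsets of $L^p(0,T;B)$, combined with Ehrling's (or Lions') interpolation inequality. The compactness of the embedding $X\hookrightarrow B$ together with the continuity of $B\hookrightarrow Y$ yields that for every $\eta>0$ there is $C_\eta>0$ such that
\[
\|v\|_B\;\leq\;\eta\,\|v\|_X+C_\eta\,\|v\|_Y \qquad \forall\, v\in X,
\]
which is the key tool for trading $X$-norms against $Y$-norms when estimating $B$-norms.

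First I would observe that $F$ is bounded in $L^p(0,T;B)$, directly from the $L^p(0,T;X)$-bound and the continuous injection $X\hookrightarrow B$. The heart of the argument is then the uniform translation estimate
\[
\sup_{f\in F}\,\|f(\cdot+h)-f\|_{L^p(0,T-h;B)}\;\longrightarrow\;0 \quad \text{as } h\to 0^+.
\]
Applying Ehrling's inequality pointwise in $t$ and taking the $L^p$-norm bounds the left-hand side by a constant multiple of $\eta\,\|f\|_{L^p(0,T;X)}+C_\eta\,\|f(\cdot+h)-f\|_{L^p(0,T-h;Y)}$. The $Y$-translation is controlled by writing $f(t+h)-f(t)=\int_t^{t+h}\partial_s f(s)\,ds$ in $Y$, which is legitimate because $\partial_t f\in L^1(0,T;Y)$ makes $f$ absolutely continuous with values in $Y$. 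Setting $g(t)=\int_t^{t+h}\|\partial_s f(s)\|_Y\,ds$, Fubini gives $\int_0^{T-h} g(t)\,dt\leq h\,\|\partial_t f\|_{L^1(0,T;Y)}$, and the pointwise bound $g(t)\leq \|\partial_t f\|_{L^1(0,T;Y)}$ upgrades this to $L^p$-decay of order $h^{1/p}$. Choosing $\eta$ small and then $h$ small delivers the translation estimate uniformly in $f\in F$.

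Finally, I would invoke the vector-valued Kolmogorov--Riesz--Fréchet theorem in $L^p(0,T;B)$: boundedness together with the uniform translation continuity gives relative compactness once an averaged sectional precompactness condition is verified; the latter follows from the compact embedding $X\hookrightarrow B$ applied to time-mollified approximants $f_\varepsilon(t)=\frac{1}{\varepsilon}\int_t^{t+\varepsilon}f(s)\,ds$, which live in a precompact subset of $B$ pointwise in $t$ and approach $f$ in $L^p$ uniformly in $f\in F$ thanks to the translation estimate. I expect the main obstacle to be the $p>1$ case of the translation estimate: since $\partial_t f$ is assumed only in $L^1(0,T;Y)$ and not $L^p$, one cannot take $L^p$-norms naively inside the integral representation, and the interpolation $g(t)^p\leq\|\partial_t f\|_{L^1(0,T;Y)}^{p-1}\,g(t)$ is the crucial trick that recovers the $h^{1/p}$ factor needed to close the argument.
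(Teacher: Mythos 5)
The paper does not prove this statement; it quotes it verbatim from Simon (Corollary~4 of the cited reference) without proof, so there is no ``paper's proof'' to compare against. Your reconstruction is nonetheless correct and tracks the argument Simon actually gives: the Ehrling--Lions interpolation inequality $\|v\|_B\le\eta\|v\|_X+C_\eta\|v\|_Y$ (valid because $X\hookrightarrow B$ is compact and $B\hookrightarrow Y$ continuous), the $Y$-translation bound obtained from $f(t+h)-f(t)=\int_t^{t+h}\partial_s f\,ds$ with the interpolation $g(t)^p\le g(t)\,\|\partial_t f\|_{L^1(0,T;Y)}^{p-1}$ to extract the $h^{1/p}$ factor, and Simon's characterization (his Theorem~1, a vector-valued Riesz--Fr\'echet--Kolmogorov criterion) whose sectional-precompactness hypothesis you verify via the uniform $X$-bound on the time-mollified averages and the compact embedding $X\hookrightarrow B$. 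The only places where you gloss slightly are the shrinking domain $(0,T-h)$ vs.\ $(0,T)$ in the translation estimate and the passage from mollified approximants back to $F$, but both are routine bookkeeping and do not affect the argument. In short: correct, and essentially the canonical proof.
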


\section{Weak Formulation.}
In this section we derive the weak formulation of the problem. But first, we introduce additional notation and present assumptions on the problem data. We assume that $\bfK\in \bfL^{\infty}(\Omega_p)$ is independent of time, uniformly bounded and positive definite. There exists $K_{\min}, K_{\max}>0$ such that
\begin{equation}\label{KSPD}
\forall \bfx\in \overline{\Omega}_p, \quad K_{\min}\bfx\cdot \bfx\leq \bfK\bfx\cdot \bfx\leq K_{\max}\bfx\cdot \bfx.
\end{equation}
Further, we assume that $\bff_f\in \bfL^2(0,T;\bfL^2(\Omega_f))$, $\bff_s\in \bfL^2(0,T;\bfL^2(\Omega_p))$, $f_p\in L^2(0,T;L^2(\Omega_p))$ and $P_{in}\in L^2(0,T;H^{1/2}(\Gamma_f^{in}))$.
The weak formulation we propose for the problem is the following:\\
{\bf(WF1)} Find $\bfu_f\in L^{\infty}(0,T;\bfL^2(\Omega_f))\cap L^2(0,T;\bfX_f)$, $p_f\in L^1(0,T;Q_f)$, $\bfeta\in W^{1,\infty}(0,T;\bfL^2(\Omega_p))\times H^1(0,T;\bfX_p)$ and $p_p\in L^{\infty}(0,T;L^2(\Omega_p))\cap L^2(0,T;Q_p)$ where $\bfudot_f\in L^1(0,T;\bfL^{\frac32}(\Omega_f))$, 
such that for all $\bfv\in \bfX_f$, $q\in Q_f$, $\bfxi\in \bfX_p$ and $r\in Q_p$, 
\begin{align*}
&(\rho_f\dot{\bfu}_f,\bfv)_{\Omega_f}+(2\mu_f\bfD(\bfu_f),\bfD(\bfv))_{\Omega_f}+\rho_f(\bfu_f\cdot\nabla \bfu_f,\bfv)_{\Omega_f}-(p_f,\nabla\cdot \bfv)_{\Omega_f}\\
&
+(\rho_s\ddot{\bfeta},\bfxi)_{\Omega_p}+(2\mu_s\bfD(\bfeta),\bfD(\bfxi))_{\Omega_p}+(\lambda_s\nabla\cdot \bfeta,\nabla\cdot\bfxi)_{\Omega_p}-(\alpha p_p,\nabla\cdot \bfxi)_{\Omega_p}\\
&+(s_0\dot{p_p}+\alpha\nabla\cdot \dot{\bfeta},r)_{\Omega_p}+(\bfK\nabla p_p,\nabla r)_{\Omega_p}
\\
&+\langle p_p\bfn_\Gamma,\bfv-\bfxi\rangle_{\Gamma_I}+\Sigma_{l=1}^{d-1}\langle\beta(\bfu_f-\bfetadot)\cdot \bft_{\Gamma}^l,(\bfv-\bfxi)\cdot \bft_{\Gamma}^l\rangle_{\Gamma_I}+\langle(\bfetadot-\bfu_f)\cdot\bfn_{\Gamma},r\rangle_{\Gamma_I}\\
&\hspace{3cm}=-\langle P_{in}(t)\bfn_{\Gamma},\bfv\rangle_{\Gamma_f^{in}}+(\bff_f,\bfv)_{\Omega_f}+(\bff_s,\bfxi)_{\Omega_p}+(f_p,r)_{\Omega_p},\\
&(\nabla \cdot \bfu_f,q)_{\Omega_f}=0,
\end{align*}
a.e. in $(0,T)$, and 
\[
\bfu_f(0)=\bfzero \mbox{ a.e. in }\Omega_f, \quad \bfeta(0)=\bfzero,\quad \dot{\bfeta}(0)=\bfzero, \quad p_p(0)=0 \mbox{ a.e. in }\Omega_p.
\]
The reason for looking for a solution in $L^{\infty}(0,T;L^2(\Omega_i))$, $i=f, p$ may not seem obvious at this point, since typically the solutions are sought in $L^2(0,T;X)$ where $X$ is an appropriate Sobolev space, but we will prove that such a solution exists.
\subsection{Equivalence of the weak formulation {\bf{(WF1)}}.}
The following proposition establishes the equivalence between the coupled problem and the weak formulation {\bf{(WF1)}} proposed in the previous section.
\begin{proposition}\label{equivalence}
Let the data satisfy 
the assumptions listed in the previous section. Then each solution $\bfu_f\in L^{\infty}(0,T;\bfL^2(\Omega_f))\cap L^2(0,T;\bfX_f)$ such that $\bfudot_f\in L^1(0,T;\bfL^{\frac32}(\Omega_f))$, $p_f\in L^1(0,T;Q_f)$, $\bfeta\in W^{1,\infty}(0,T;\bfL^2(\Omega_p))\cap H^1(0,T;\bfX_p)$ 
 and $p_p\in  L^{\infty}(0,T;L^2(\Omega_p))\cap L^2(0,T;Q_p)$ of the problem defined by \eqref{moment}-\eqref{mass}, \eqref{structure}-\eqref{mass1} and \eqref{bc_f_in}-\eqref{inter4} is also a solution of the variational problem {\bf{(WF1)}} and conversely.
\end{proposition}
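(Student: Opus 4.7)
The proof is the standard equivalence argument between a strong (PDE) formulation and a weak (variational) one, in which the interface conditions have to be woven carefully into the combined variational identity. I will treat the two implications separately.

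\textbf{Strong $\Rightarrow$ weak.} Assume the listed regularity and that the tuple $(\bfu_f,p_f,\bfeta,p_p)$ solves \eqref{moment}--\eqref{mass}, \eqref{structure}--\eqref{mass1} together with \eqref{bc_f_in}--\eqref{inter4}. I test the fluid momentum equation against $\bfv\in\bfX_f$, the structure momentum equation against $\bfxi\in\bfX_p$, and the Darcy/storage equation against $r\in Q_p$, each integrated over its own domain. A Green's identity applied to $-\nabla\cdot\bfsigma_f$, $-\nabla\cdot\bfsigma_p$ and $-\nabla\cdot(\bfK\nabla p_p)$ produces interior bilinear terms that exactly match those in \textbf{(WF1)} together with three boundary integrals. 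The integrals over $\Gamma_f^{ext}$, $\Gamma_p^s$ and (for the diffusion equation) $\Gamma_p^{ext}$ vanish because $\bfv$, $\bfxi$ and $r$ respectively vanish there. On $\Gamma_f^{out}$ we use $\bfsigma_f\bfn_f=\bfzero$, on $\Gamma_f^{in}$ we substitute $\bfsigma_f\bfn_f=-P_{in}\bfn_f$ to obtain the inlet term, and on $\Gamma_p^{ext}$ the structure boundary integral disappears because $\bfn_p\cdot\bfsigma_p^E\bfn_p=0$ and $p_p=0$ combine with $\bft_p^l\cdot\bfxi=0$. What remains are three integrals on $\Gamma_I$. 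Testing the mass equation \eqref{mass} against $q\in Q_f$ gives the divergence-free constraint directly.

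\textbf{Assembling the interface contributions.} Using $\bfn_f=\bfn_\Gamma=-\bfn_p$ on $\Gamma_I$, the diffusion boundary term becomes $\int_{\Gamma_I}\bfK\nabla p_p\cdot\bfn_\Gamma\,r$, which by the normal-flux condition \eqref{inter1} equals $\langle(\bfetadot-\bfu_f)\cdot\bfn_\Gamma,r\rangle_{\Gamma_I}$, matching \textbf{(WF1)}. For the momentum boundary pieces I add the fluid and structure contributions to obtain $-\int_{\Gamma_I}\bfsigma_f\bfn_\Gamma\cdot\bfv+\int_{\Gamma_I}\bfsigma_p\bfn_\Gamma\cdot\bfxi$, and the stress-balance condition \eqref{inter2} reduces this to $-\int_{\Gamma_I}\bfsigma_f\bfn_\Gamma\cdot(\bfv-\bfxi)$. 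Decomposing $\bfv-\bfxi$ into its normal and tangential components on $\Gamma_I$ and invoking the symmetry of $\bfsigma_f$ together with \eqref{inter3} and the Beavers--Joseph--Saffman condition \eqref{inter4}, this splits as
\begin{equation*}
\langle p_p\bfn_\Gamma,\bfv-\bfxi\rangle_{\Gamma_I}+\sum_{l=1}^{d-1}\langle\beta(\bfu_f-\bfetadot)\cdot\bft_\Gamma^l,(\bfv-\bfxi)\cdot\bft_\Gamma^l\rangle_{\Gamma_I},
\end{equation*}
which reproduces exactly the interface terms of \textbf{(WF1)}. Summing the three tested identities yields the weak form; the initial conditions are inherited from \eqref{initial}.

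\textbf{Weak $\Rightarrow$ strong.} Conversely, suppose the tuple solves \textbf{(WF1)}. Taking $q\in\calD(\Omega_f)$ in the mass equation gives $\nabla\cdot\bfu_f=0$ a.e.\ in $\Omega_f$, so \eqref{mass} holds. Choosing $\bfv\in\calD(\Omega_f)^d$, $\bfxi=\bfzero$, $r=0$ in the momentum identity makes every boundary and interface term vanish and yields \eqref{moment} in $\calD'(\Omega_f)$; by the assumed regularity this is actually an $L^{3/2}$ identity. Similarly $\bfxi\in\calD(\Omega_p)^d$ with $\bfv=\bfzero$, $r=0$ recovers \eqref{structure}, and $r\in\calD(\Omega_p)$ with $\bfv=\bfzero$, $\bfxi=\bfzero$ recovers \eqref{mass1}. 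With the three PDEs now available, I rerun the integration by parts on general test functions and subtract the weak identity. This leaves only boundary/interface integrals: first taking $\bfv$ supported near $\Gamma_f^{in}$ (respectively $\Gamma_f^{out}$) and with free trace there identifies \eqref{bc_f_in} and \eqref{bc_f_out}; taking $r$ supported near $\Gamma_p^s$ identifies the no-flux condition in \eqref{bc_p_s}; and the essential conditions on $\bfu_f$, $\bfeta$ and $p_p$ are already built into the choice of trial spaces. The remaining terms live on $\Gamma_I$. Picking $\bfxi=\bfzero$, $r=0$ and $\bfv$ with prescribed trace on $\Gamma_I$ isolates the tangential stress and reproduces \eqref{inter4} after identifying the $p_p\bfn_\Gamma\cdot\bfv$ normal piece with \eqref{inter3}; then swapping roles with $\bfxi$ (and $\bfv=\bfzero$) gives the stress-balance \eqref{inter2}; finally varying $r$ with nonzero trace on $\Gamma_I$ returns \eqref{inter1}. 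The initial data in \textbf{(WF1)} are inherited pointwise since the regularity makes $\bfu_f$, $\bfeta$, $\bfetadot$ and $p_p$ continuous into their natural $L^2$ spaces.

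\textbf{Main obstacle.} The routine integrations by parts are straightforward; the delicate point is the recovery of the four interface conditions in the converse direction. Conditions \eqref{inter1}--\eqref{inter4} are coupled in the single residual $-\int_{\Gamma_I}\bfsigma_f\bfn_\Gamma\cdot(\bfv-\bfxi)+\int_{\Gamma_I}(\bfsigma_p-\bfsigma_f)\bfn_\Gamma\cdot\bfxi$ produced after back-integration, and disentangling them requires carefully chosen test functions (purely tangential versus purely normal traces, activating $\bfv$ alone versus $\bfxi$ alone) together with the symmetry of the stress tensors and density of admissible traces in $H^{1/2}_{00}(\Gamma_I)$. This is the step that needs to be executed with care; the rest is bookkeeping.
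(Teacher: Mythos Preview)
Your proposal is correct and follows essentially the same route as the paper: test each PDE against the appropriate test function, integrate by parts, handle the exterior boundaries with the boundary conditions and the interface pieces with \eqref{inter1}--\eqref{inter4}; for the converse, recover the PDEs by testing with compactly supported functions, then subtract and peel off the boundary/interface conditions one at a time by choosing test functions with suitably localized traces. The order in which you recover the interface conditions differs from the paper's (the paper first varies $r$ to get \eqref{inter1} and the no-flux condition on $\Gamma_p^s$, then $\bfxi$ to get \eqref{inter2} and the $\Gamma_p^{ext}$ stress condition, then $\bfv$ to get \eqref{inter3}--\eqref{inter4}), but your order works just as well.

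One small omission: in the converse direction you write that ``the essential conditions on $\bfu_f$, $\bfeta$ and $p_p$ are already built into the choice of trial spaces,'' but the second condition in \eqref{bc_p_ext}, namely $\bfn_p\cdot\bfsigma_p^E\bfn_p=0$ on $\Gamma_p^{ext}$, is a \emph{natural} condition and is not encoded in $\bfX_p$. It must be recovered explicitly by choosing $\bfxi\in\bfX_p$ with $\bfxi=\bfzero$ on $\Gamma_I$ and nonzero normal trace on $\Gamma_p^{ext}$ (since $\bft_p^l\cdot\bfxi=0$ there, the residual reduces to $\langle\bfn_p\cdot\bfsigma_p^E\bfn_p,\bfxi\cdot\bfn_p\rangle_{\Gamma_p^{ext}}$, using $p_p=0$ on $\Gamma_p^{ext}$). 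The paper does exactly this before moving on to the $\Gamma_I$ conditions. This is a one-line fix and does not affect the rest of your argument.
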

\begin{proof}
We first show sufficiency. To simplify the presentation, we included in  \ref{justification} the justification of using Green's formula in the following proof. Let $(\bfu_f, p_f, \bfeta, p_p)$ be a solution of the coupled problem defined by \eqref{moment}-\eqref{mass}, \eqref{structure}-\eqref{mass1}, \eqref{bc_f_in}-\eqref{inter4} satisfying the regularity stated in the proposition. We multiply \eqref{moment} by $\bfv\in \bfX_f$. After integration by parts:
\begin{multline*}
(\rho_f  \bfudot_f, \bfv)_{\Omega_f}
+(2  \mu_f\, D(\bfu_f),D(\bfv))_{\Omega_f}+\rho_f( \bfu_f\cdot\nabla\bfu_f,\bfv)_{\Omega_f}- (
p_f,\nabla \cdot\bfv)_{\Omega_f}\\
 -\langle \bfsigma_f\bfn_f, \bfv\rangle_{\partial\Omega_f} = (\bff_f, \bfv)_{\Omega_f}.
\end{multline*}
Using \eqref{bc_f_in}, \eqref{bc_f_out}, $\bfv=\bfzero$ on $\Gamma_f^{ext}$ and $\bfn_f=\bfn_{\Gamma}$ on $\Gamma_I$, we have
\begin{multline}\label{momentweak}
(\rho_f  \bfudot_f, \bfv)_{\Omega_f}
+(2  \mu_f\, D(\bfu_f),D(\bfv))_{\Omega_f}+\rho_f( \bfu_f\cdot\nabla\bfu_f,\bfv)_{\Omega_f}- (
p_f,\nabla \cdot\bfv)_{\Omega_f} \\-\langle \bfsigma_f\bfn_{\Gamma}, \bfv\rangle_{\Gamma_I} = (\bff_f, \bfv)_{\Omega_f}-\langle P_{in}(t), \bfv\rangle_{\Gamma_f^{in}}.
\end{multline}
Next we multiply \eqref{mass} by $q\in Q_f$ and integrate to get
\begin{equation}\label{massweak}
(\nabla\cdot \bfu_f,q)_{\Omega_f}=0.
\end{equation}
Multiplying \eqref{structure} by $\bfxi\in \bfX_p$ and integrating by parts yields:
\begin{multline*}
(\rho_p \bfetaddot,\bfxi)_{\Omega_p}
+(2 \mu_s \,  D(\bfeta), D(\bfxi))_{\Omega_p} + (\lambda_s (\nabla \cdot \bfeta), \nabla\cdot \bfxi)_{\Omega_p}  -(\alpha  p_p,\nabla\cdot\bfxi)_{\Omega_p}\\-\langle\bfsigma_p\bfn_p,\bfxi \rangle_{\partial\Omega_p}
= (\bff_s,\bfxi)_{\Omega_p}.
\end{multline*}
Observe that \eqref{bc_p_ext} implies $\bfsigma_p\bfn_p=\bfsigma_p^E\bfn_p=\Sigma_{l=1}^{d-1}(\bft_p^l\cdot\bfsigma_p^E\bfn_p)\cdot \bft_p^l$ on $\Gamma_p^{ext}$. Then since $\bft_p^l\cdot\bfxi=0$ on $\Gamma_p^{ext}$,  $\bfxi=0$ on $\Gamma_p^s$ and $\bfn_p=-\bfn_{\Gamma}$ on $\Gamma_I$, we have
\begin{multline}\label{structureweak}
(\rho_p  \bfetaddot,\bfxi)_{\Omega_p}
+(2 \mu_s \,  D(\bfeta), D(\bfxi))_{\Omega_p} + (\lambda_s (\nabla \cdot \bfeta), \nabla\cdot \bfxi)_{\Omega_p} - (\alpha  p_p,\nabla\cdot\bfxi)_{\Omega_p}\\
+\langle\bfsigma_p\bfn_{\Gamma},\bfxi \rangle_{\Gamma_I}= (\bff_s,\bfxi)_{\Omega_p}.
\end{multline}
Multiplying \eqref{mass1} by $r\in Q_p$ and integrating over $\Omega_p$, we obtain
\begin{align*}
(s_0 \pdot_p+ \alpha  \nabla \cdot \bfetadot,r)_{\Omega_p} +(\bfK\nabla p_p, \nabla r)_{\Omega_p}
-\langle\bfK\nabla p_p\cdot \bfn_p, r\rangle_{\partial\Omega_p}= (f_p,r)_{\Omega_p}.
\end{align*}
Using \eqref{bc_p_s}, $r=0$ on $\Gamma_p^{ext}$, and $\bfn_p=-\bfn_{\Gamma}$ on $\Gamma_I$, we have
\begin{equation}\label{mass1weak}
(s_0 \pdot_p+ \alpha  \nabla \cdot \bfetadot,r)_{\Omega_p} +(\bfK\nabla p_p, \nabla r)_{\Omega_p}
+\langle\bfK\nabla p_p\cdot \bfn_{\Gamma}, r\rangle_{\Gamma_I}= (f_p,r)_{\Omega_p}.
\end{equation}
Next, we rewrite the interface integrals using the interface conditions \eqref{inter1}-\eqref{inter4}.
On $\Gamma_I$, by \eqref{inter3} and \eqref{inter4}, we have
\begin{equation}\label{normalstress}
\bfsigma_f\bfn_{\Gamma}=(\bfn_{\Gamma}\cdot \bfsigma_f\bfn_{\Gamma})\bfn_{\Gamma}+\Sigma_{l=1}^{d-1}(\bft_{\Gamma}^l\cdot \bfsigma_f\bfn_{\Gamma})\bft_{\Gamma}^l= -p_p\bfn_{\Gamma}-\Sigma_{l=1}^{d-1}(\beta(\bfu_f-\bfetadot)\cdot \bft_{\Gamma}^l)\bft_{\Gamma}^l.
\end{equation}
(Note here that since $\sigma_f$ is symmetric, $\bft_{\Gamma}^l\cdot \bfsigma_f\bfn_{\Gamma}=\bfn_{\Gamma}\cdot \bfsigma_f\bft_{\Gamma}^l$.)
Adding \eqref{momentweak}, \eqref{massweak}, \eqref{structureweak} and \eqref{mass1weak} while using \eqref{normalstress} for $\langle \bfsigma_f\bfn_{\Gamma},\bfv \rangle_{\Gamma_{I}}$, \eqref{inter1} for $\langle\bfK\nabla p_p\cdot \bfn_{\Gamma}, r\rangle_{\Gamma_I}$ and \eqref{inter2} and \eqref{normalstress} for $\langle \bfsigma_p\bfn_{\Gamma},\bfxi \rangle_{\Gamma_{I}}$ gives  the weak formulation {\bf{(WF1)}}.

For the converse, let $(\bfu_f, p_f, \bfeta, p_p)$ be a solution of {\bf{(WF1)}}. 
We pick first $\bfv\in \bfcalD(\Omega_f)$, $r=0$ and $\bfxi=\bfzero$, second $\bfv\in \bfzero$, $r\in D(\Omega_p)$ and $\bfxi=\bfzero$ and last $\bfv=\bfzero$, $r=0$ and $\bfxi\in \bfcalD(\Omega_p)$. This gives \eqref{moment} on $\Omega_f$ and \eqref{structure} and \eqref{mass1} on $\Omega_p$ in the sense of distributions. Next we multiply \eqref{moment} with $\bfv\in \bfX_f$, \eqref{structure} with $\bfxi\in \bfX_p$ and \eqref{mass1} with $r\in Q_p$ and apply Green's formulas and add the outcomes to get
\begin{align*}
(\rho_f\dot{\bfu}_f&,\bfv)_{\Omega_f}+(2\mu_f\bfD(\bfu_f),\bfD(\bfv))_{\Omega_f}+\rho_f(\bfu_f\cdot\nabla \bfu_f,\bfv)_{\Omega_f}-(p_f,\nabla\cdot \bfv)_{\Omega_f}\\
&+(\rho_s\ddot{\bfeta},\bfxi)_{\Omega_p}+(2\mu_s\bfD(\bfeta),\bfD(\bfxi))_{\Omega_p}+(\lambda_s\nabla\cdot \bfeta,\nabla\cdot\bfxi)_{\Omega_p}-(\alpha p_p,\nabla\cdot \bfxi)_{\Omega_p}\\
&+(s_0\dot{p_p}+\alpha\nabla\cdot \dot{\bfeta},r)_{\Omega_p}+(\bfK\nabla p_p,\nabla r)_{\Omega_p}
\\&-\langle \bfsigma_f
\bfn_{f},\bfv\rangle_{\Gamma_f^{in}\cup\Gamma_f^{out}\cup\Gamma_I}-\langle \bfsigma_p
\bfn_p, \bfxi\rangle_{\Gamma_p^{ext}\cup\Gamma_I}-\langle \bfK \nabla p_p\cdot \bfn_p,r \rangle_{\Gamma_{p}^s\cup\Gamma_I}\\
&=(\bff_f,\bfv)_{\Omega_f}+(\bff_s,\bfxi)_{\Omega_p}+(f_p,r)_{\Omega_p}.
\end{align*}
Comparing this with {\bf{(WF1)}} gives
\begin{align}
\langle p_p\bfn_\Gamma,\bfv-\bfxi\rangle_{\Gamma_I}&+\Sigma_{l=1}^{d-1}\langle\beta(\bfu_f-\bfetadot)\cdot \bft_{\Gamma}^l,(\bfv-\bfxi)\cdot \bft_{\Gamma}^l\rangle_{\Gamma_I}\nonumber\\
&\hspace{2cm}+\langle(\bfetadot-\bfu_f)\cdot\bfn_{\Gamma},r\rangle_{\Gamma_I}
+\langle P_{in}(t)\bfn_f,\bfv\rangle_{\Gamma_f^{in}}
\label{aux1}\\
=-&\langle \bfsigma_f
\bfn_{f},\bfv\rangle_{\Gamma_f^{in}\cup\Gamma_f^{out}\cup\Gamma_I}-\langle \bfsigma_p
\bfn_p, \bfxi\rangle_{\Gamma_p^{ext}\cup\Gamma_I}-\langle \bfK \nabla p_p\cdot \bfn_p,r \rangle_{\Gamma_{p}^s\cup\Gamma_I}\nonumber
\end{align}
for all $(\bfv, \bfxi, r)\in \bfX_f\times \bfX_p\times Q_p$.
If we let $\bfv=\bfzero$, $\bfxi=\bfzero$ in \eqref{aux1}, we get 
\begin{equation}\label{aux2}
-\langle \bfK \nabla p_p\cdot \bfn_p,r \rangle_{\Gamma_{p}^s\cup\Gamma_I}=\langle(\bfetadot-\bfu_f)\cdot\bfn_{\Gamma},r\rangle_{\Gamma_I}, \forall r\in Q_p.
\end{equation}
The choice $r\in Q_p$ such that $r|_{\Gamma_I}=0$ yields
\[
\langle \bfK \nabla p_p\cdot \bfn_p,r \rangle_{\Gamma_p^s}=0
\]
which implies the first condition of \eqref{bc_p_s}.
Using this  in \eqref{aux2}, we get 
\[
-\langle \bfK \nabla p_p\cdot \bfn_p,r\rangle_{\Gamma_I}=\langle(\bfetadot-\bfu_f)\cdot\bfn_{\Gamma},r\rangle_{\Gamma_I}, \forall r\in Q_p
\]
 which yields \eqref{inter1}. This reduces 
\eqref{aux1} to 
\begin{align}
&\langle p_p\bfn_\Gamma,\bfv-\bfxi\rangle_{\Gamma_I}+\sum_{l=1}^{d-1}\langle\beta(\bfu_f-\bfetadot)\cdot \bft_{\Gamma}^l,(\bfv-\bfxi)\cdot \bft_{\Gamma}^l\rangle_{\Gamma_I}+\langle P_{in}(t)\bfn_f,\bfv\rangle_{\Gamma_f^{in}}\label{aux3}\\
&=-\langle \bfsigma_f
\bfn_{f},\bfv\rangle_{\Gamma_f^{in}\cup\Gamma_f^{out}\cup\Gamma_I}-\langle \bfsigma_p
\bfn_p, \bfxi\rangle_{\Gamma_p^{ext}\cup\Gamma_I}, \forall \bfv\in \bfX_f, \forall \bfxi\in \bfX_p.\nonumber
\end{align}
Now we let $\bfv=\bfzero$ in \eqref{aux3}. Then
\[
-\langle p_p\bfn_{\Gamma},\bfxi\rangle_{\Gamma_I}-\sum_{l=1}^{d-1}\langle\beta(\bfu_f-\bfetadot)\cdot \bft_{\Gamma}^l,\bfxi\cdot \bft_{\Gamma}^l\rangle_{\Gamma_I}\\
=-\langle \bfsigma_p\bfn_p, \bfxi\rangle_{\Gamma_p^{ext}\cup\Gamma_I},  \forall \bfxi\in \bfX_p.
\]
Since $p_p=0$ on $\Gamma_p^{ext}$, the choice $\bfxi\in \bfX_p$  
such that $\bfxi=\bfzero$ on $\Gamma_I$ implies
\[
0
=\langle\bfsigma_p\bfn_p, \bfxi\rangle_{\Gamma_{ext}^p}=\langle\bfn_p\cdot\bfsigma_p\bfn_p, \bfxi\cdot \bfn_p\rangle_{\Gamma_{ext}^p}=\langle\bfn_p\cdot\bfsigma_p^E\bfn_p, \bfxi\cdot \bfn_p\rangle_{\Gamma_{ext}^p}.
\]
Therefore we recover 
in the sense of distributions the second condition in \eqref{bc_p_ext}. This also yields
\[
\langle -p_p\bfn_{\Gamma}-\sum_{l=1}^{d-1}\beta((\bfu_f-\bfetadot)\cdot \bft_{\Gamma}^l)\bft_{\Gamma}^l,\bfxi \rangle_{\Gamma_I}\\
=-\langle \bfsigma_p\bfn_{p}, \bfxi\rangle_{\Gamma_I},  \forall \bfxi\in \bfX_p.
\]
Therefore 
\begin{equation}\label{aux4}
-p_p\bfn_{\Gamma}-\sum_{l=1}^{d-1}\beta((\bfu_f-\bfetadot)\cdot \bft_{\Gamma}^l)\bft_{\Gamma}^l=\bfsigma_p\bfn_{\Gamma}
\end{equation}
holds in the sense of distributions.
This reduces \eqref{aux3} to 
\begin{multline*}
\langle p_p\bfn_\Gamma,\bfv\rangle_{\Gamma_I}+\sum_{l=1}^{d-1}\langle\beta(\bfu_f-\bfetadot)\cdot \bft_{\Gamma}^l,\bfv\cdot \bft_{\Gamma}^l\rangle_{\Gamma_I}+\langle P_{in}(t)\bfn_f,\bfv\rangle_{\Gamma_f^{in}}\\
=-\langle \bfsigma_f\bfn_{f},\bfv\rangle_{\partial\Omega_f}, \forall \bfv\in \bfX_f.
\end{multline*}
Letting $\bfv\in \bfX_f$ such that $\bfv=\bfzero$ on $\Gamma_I$ gives
\[
\langle P_{in}(t)\bfn_f,\bfv\rangle_{\Gamma_f^{in}}=-\langle \bfsigma_f\bfn_{f},\bfv\rangle_{\Gamma_f^{in}\cup\Gamma_f^{out}}.
\]
Picking $\bfv$ such that $\bfv=\bfzero$ on $\Gamma_f^{out}$ implies 
\eqref{bc_f_in}. If we plug this in the above equation we get $0=-\langle \bfsigma_f\bfn_{f},\bfv\rangle_{\Gamma_f^{out}}$ concluding that $\bfsigma_f\bfn_{f}=\bfzero$ on $\Gamma_f^{out}\times (0,T)$ in the distributional sense. This gives \eqref{bc_f_out} and also implies that 
\[
\langle p_p\bfn_\Gamma,\bfv\rangle_{\Gamma_I}+\sum_{l=1}^{d-1}\langle\beta(\bfu_f-\bfetadot)\cdot \bft_{\Gamma}^l,\bfv\cdot \bft_{\Gamma}^l\rangle_{\Gamma_I}=-\langle \bfsigma_f\bfn_{f},\bfv\rangle_{\Gamma_I}, \forall \bfv\in \bfX_f.
\]
Therefore 
\[
p_p\bfn_{\Gamma}+\sum_{l=1}^{d-1}\langle(\beta(\bfu_f-\bfetadot)\cdot \bft_{\Gamma}^l)\bft_{\Gamma}^l=-\bfsigma_f\bfn_{f}.
\] 
This compared to \eqref{aux4} implies \eqref{inter2} and also gives \eqref{inter3} and \eqref{inter4} after dotted with $\bfn_{\Gamma}$ and $\bft_{\Gamma}^l$, $1\leq j\leq d-1$. 
\end{proof}
\section{Main results.}
This section summarizes the main results of this paper. First, for the sake of simplicity, we define the following functions of time:
\begin{subequations}\label{Cs}
\begin{align}
\calC_1(t)&=\Big(\dfrac{3 T_2^2K_f^2}{4\mu_f}\|P_{in}(t)\|^2_{\Gamma_{f}^{in}}+\dfrac{ 3P_1^2K_f^2}{4\mu_f}\|\bff_f(t)\|^2_{\Omega_f}+\dfrac{P_3^2}{2K_{\min}}\|f_p(t)\|_{\Omega_p}^2+\dfrac12\|\bff_s(t)\|_{\Omega_p}^2\Big)^{1/2},\\
\calC_2(t)&=\Big(\dfrac{3 T_2^2K_f^2}{2\mu_f}\|\dot{P}_{in}(t)\|_{\Gamma_f^{in}}^2+\dfrac{3 P_1^2K_f^2}{2\mu_f}\|\dot{\bff_f}(t)\|^2_{\Omega_f}+\dfrac{P_3^2}{2K_{\min}}\|\dot{f_p}(t)\|_{\Omega_p}^2+\dfrac12\|\dot{\bff}_s(t)\|^2_{\Omega_p}\Big)^{1/2},
\end{align}
a.e. in $(0,T)$,
and the following constant:
\begin{align}
\calC_3=&\left(\dfrac{C_j^2}{\rho_f}\|P_{in}(0)\|^2_{H_{00}^{1/2}(\Gamma_f^{in})}+\dfrac1{\rho_f}\|\bff_f(0)\|_{\Omega_f}^2
+\dfrac1{2s_0}\|f_p(0)\|^2_{\Omega_p}+\dfrac1{2\rho_s}\|\bff_s(0)\|^2_{\Omega_p}\right)^{1/2}, 
\end{align}
\end{subequations}
where the constants $T_2, K_f, P_1, P_3$ are defined in Section \ref{usefulresults} and $C_j$ is the continuity constant of the continuous lifting operator from $H^{1/2}(\partial \Omega_f)\rightarrow H^1(\Omega_f)$. 
Observe that $\calC_1, \calC_2$ and $\calC_3$ depend only on the data of the  problem. 
We now present out main existence and uniqueness result.
\begin{theorem}\label{mainresult}
Assume that $\bff_f\in H^1(0,T;\bfL^2(\Omega_f))$, $\bff_s\in H^1(0,T;\bfL^2(\Omega_p))$, $\bff_p\in H^1(0,T;L^2(\Omega_f))$ and $P_{in}\in H^1(0,T;H^{1/2}(\Gamma_f^{in}))$ and that the following small data condition holds:
\begin{multline}
(1+\dfrac{T}{\rho_s}e^{T/\rho_s})\|\calC_2\|_{L^2(0,T)}^2+\dfrac{T}{\rho_s^2}e^{T/\rho_s}\|\bff_s(0)\|^2_{\Omega_p}\label{smalldatacond}\\+(1+\dfrac1{\rho_s}e^{T/\rho_s}+\dfrac{T}{\rho_s^2}e^{T/\rho_s})\|\calC_1\|_{L^2(0,T)}^2+\|\calC_1\|^2_{L^{\infty}(0,T)}<\dfrac{\mu_f^3}{9\rho_f^2S_f^4K_f^6}.
\end{multline}
Then, problem {\bf{(WF1)}}has a unique solution $(\bfu_f, p_f,\bfeta, p_p)$ 
such that 
\begin{align}
&\dfrac{\rho_f}2\|\bfu_f\|_{L^{\infty}(0,T;\bfL^2(\Omega_f))}^2
+\frac{\rho_s}2\|\dot{\bfeta}\|_{L^{\infty}(0,T;\bfL^2(\Omega_p))}^2+\mu_s\|\bfD(\bfeta)\|^2_{L^{\infty}(0,T;\Lbb^2(\Omega_p))}\nonumber\\
&+\frac{s_0}2\|p_p\|_{L^{\infty}(0,T;L^2(\Omega_f))}^2
+\mu_f\|\bfD(\bfu_f)\|^2_{L^2(0,T;\Lbb^2(\Omega_f))}+\dfrac12\|\bfK^{1/2}\nabla p_p\|^2_{L^2(0,T;\bfL^2(\Omega_p))}\nonumber\\
&\hspace{5cm}\leq \left(1+\dfrac{T}{\rho_s}e^{T/\rho_s}\right)\|\calC_1\|^2_{L^2(0,T)}.\label{ineq:mainresult1}
\end{align}
Furthermore,
\begin{align}
\|\bfD(\bfu_f)\|_{L^{\infty}(0,T;\Lbb^2(\Omega_f))}&<\dfrac{\mu_f}{3\rho_fS_f^2K_f^3},\label{ineq:mainresult2}
\end{align}
\begin{align}
&\dfrac{\rho_f}2\|\dot{\bfu}_f\|^2_{L^{\infty}(0,T;\bfL^2(\Omega_f))}
+\dfrac{\rho_s}2\|\ddot{\bfeta}\|^2_{L^{\infty}(0,T;\bfL^2(\Omega_p))}+\mu_s\|\bfD(\dot{\bfeta})\|^2_{L^{\infty}(0,T;\Lbb^2(\Omega_p))}\nonumber\\
&+\dfrac{s_0}2\|\dot{p}_p\|^2_{L^{\infty}(0,T;L^2(\Omega_f))}+\mu_f\|\bfD(\dot{\bfu}_f)\|^2_{L^2(0,T;L^2(\Omega_f))}+\dfrac12\|\bfK^{1/2}\nabla \dot{p}_p\|^2_{L^2(0,T;\bfL^2(\Omega_p))}\nonumber
\\
&\hspace{2cm}\leq (1+\dfrac{T}{\rho_s}e^{2T/\rho_s^2})\|\calC_2\|^2_{L^2(0,T)}+\dfrac{Te^{2T/\rho_s^2}}2\calC_3,\label{ineq:mainresult3}
\end{align}
and 
\begin{align}\label{pfbound}
&\|p_f\|_{L^{\infty}(0,T;L^2(\Omega_f))}\leq \dfrac1{\kappa}\big(\rho_f\|\dot{\bfu}_f\|_{L^{\infty}(0,T;\bfL^2(\Omega_f))}+2\mu_f\|\bfD(\bfu_f)\|_{L^{\infty}(0,T;\Lbb^2(\Omega_f))}\nonumber\\
&+S_f^2\|\bfu_f\|_{L^{\infty}(0,T;\bfH^1(\Omega_f))}^2
+T_1T_3\|p_p\|_{L^{\infty}(0,T;H^1(\Omega_p))}+\beta T_1^2\|\bfu_f\|_{L^{\infty}(0,T;\bfH^1(\Omega_f))}\nonumber\\
&+\beta T_1T_5\|\dot{\bfeta}\|_{L^{\infty}(0,T;\bfH^1(\Omega_p))}+T_2\|P_{in}\|_{L^{\infty}(0,T;L^2(\Gamma_f^{in}))}+\|\bff_f\|_{L^{\infty}(0,T;\bfL^2(\Omega_f))}\big).
\end{align}
\end{theorem}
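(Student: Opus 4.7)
The plan is to follow the roadmap announced in the introduction: build a semi-discrete Galerkin scheme, derive two layers of a priori estimates, pass to the limit by compactness, and recover the fluid pressure at the end. To avoid the incompressibility constraint in the approximation, I would use Galerkin bases of $\bfV_f$, $\bfX_p$ and $Q_p$; this reduces the problem to a system of ODEs that is quadratic in the fluid unknowns and has local Carath\'eodory solutions, whose globalization on $[0,T]$ will follow from the bounds below. The fluid pressure $p_f$ is reintroduced at the very end by the classical inf-sup / de Rham argument, giving both its existence and the bound \eqref{pfbound}: every term on the right of \eqref{pfbound} is already controlled by \eqref{ineq:mainresult1}-\eqref{ineq:mainresult3}, with the Sobolev inequality \eqref{sobolev} handling the convection, \eqref{trace1}-\eqref{trace3} handling the interface terms, and $\kappa$ denoting the inf-sup constant for $(\bfX_f,L^2(\Omega_f))$.

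For the \emph{first energy estimate}, test the Galerkin system with $(\bfv,\bfxi,r)=(\bfu_f^N,\dot{\bfeta}^N,p_p^N)$. Divergence-freeness of the basis kills the trilinear form $\rho_f(\bfu_f^N\cdot\nabla\bfu_f^N,\bfu_f^N)$; the two $\alpha$-coupling terms cancel; and on the interface the pore-pressure contributions $\langle p_p^N\bfn_{\Gamma},\bfu_f^N-\dot\bfeta^N\rangle_{\Gamma_I}+\langle(\dot\bfeta^N-\bfu_f^N)\cdot\bfn_{\Gamma},p_p^N\rangle_{\Gamma_I}$ annihilate, leaving only the nonnegative Beavers-Joseph-Saffman dissipation. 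After Young, Poincar\'e, trace, and Korn inequalities \eqref{trace1}-\eqref{korn}, the only remaining low-order term ($\dot\bfeta^N$ paired with $\bff_s$) is controlled by Gronwall's inequality \eqref{Gronwall} applied to $\|\dot\bfeta^N\|_{\bfL^2(\Omega_p)}^2$, producing exactly the prefactor $1+(T/\rho_s)e^{T/\rho_s}$ in \eqref{ineq:mainresult1}. This level of estimate is fully linear and does not invoke the small data hypothesis.

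The delicate part, and the principal technical obstacle, is the \emph{second-level estimate} yielding \eqref{ineq:mainresult2}-\eqref{ineq:mainresult3}: formally differentiate the Galerkin equations in time and test with $(\dot\bfu_f^N,\ddot\bfeta^N,\dot p_p^N)$. Initial values for the acceleration variables are recovered by evaluating the undifferentiated equation at $t=0$ and using the vanishing initial data; this is the origin of $\calC_3$. The dangerous new terms are
\begin{equation*}
\rho_f(\dot\bfu_f\cdot\nabla\bfu_f,\dot\bfu_f)+\rho_f(\bfu_f\cdot\nabla\dot\bfu_f,\dot\bfu_f),
\end{equation*}
of which the second vanishes by skew-symmetry while the first is bounded via \eqref{sobolev} and \eqref{korn} by a constant multiple of $\rho_f S_f^2 K_f^3\,\|\bfD(\bfu_f)\|_{\Lbb^2(\Omega_f)}\,\|\bfD(\dot\bfu_f)\|_{\Lbb^2(\Omega_f)}^2$. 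As long as $\|\bfD(\bfu_f)(t)\|_{\Lbb^2(\Omega_f)}<\mu_f/(3\rho_f S_f^2 K_f^3)$ this is absorbed into the viscous dissipation on the left and Gronwall closes. A continuation argument applied to the set $\{t\in[0,T]:\|\bfD(\bfu_f)\|_{L^\infty(0,t;\Lbb^2(\Omega_f))}\le\mu_f/(3\rho_f S_f^2 K_f^3)\}$, combined with \eqref{ineq:mainresult1} and the second estimate, will show under the smallness assumption \eqref{smalldatacond} that this set equals the entire interval $[0,T]$; the constants in \eqref{smalldatacond} are precisely tuned so that the closing inequality lies strictly below the threshold $\mu_f^3/(9\rho_f^2 S_f^4 K_f^6)$. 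Finally, Theorem \ref{Simon} furnishes strong convergence of $\bfu_f^N$ in $L^2(0,T;\bfL^2(\Omega_f))$ from the uniform $L^2(0,T;\bfX_f)$ and $L^1(0,T;\bfL^{3/2}(\Omega_f))$ bounds, which suffices to pass to the limit in the convective nonlinearity; all other terms are linear and handled by weak convergence. Uniqueness follows by subtracting two solutions and applying Gronwall once more, the bound \eqref{ineq:mainresult2} being exactly what is needed to absorb the difference-of-convection terms.
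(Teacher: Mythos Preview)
Your overall architecture matches the paper's, but there is a genuine gap at the very first energy estimate. You assert that ``divergence-freeness of the basis kills the trilinear form $\rho_f(\bfu_f^N\cdot\nabla\bfu_f^N,\bfu_f^N)$'' and that, in the time-differentiated estimate, $\rho_f(\bfu_f\cdot\nabla\dot\bfu_f,\dot\bfu_f)$ ``vanishes by skew-symmetry''. Neither of these vanishings holds in the present geometry. The identity you have in mind,
\[
(\bfu\cdot\nabla\bfw,\bfw)_{\Omega_f}=\tfrac12\int_{\partial\Omega_f}(\bfu\cdot\bfn_f)\,|\bfw|^2,
\]
has a nonzero right-hand side here: elements of $\bfV_f$ vanish only on $\Gamma_f^{ext}$, while on $\Gamma_f^{in}$, $\Gamma_f^{out}$ and $\Gamma_I$ they carry no essential boundary condition (the conditions \eqref{bc_f_in}, \eqref{bc_f_out}, \eqref{inter1}--\eqref{inter4} are all natural). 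Consequently $(\bfu_m\cdot\nabla\bfu_m,\bfu_m)_{\Omega_f}$ does \emph{not} disappear; the paper keeps it and bounds it by $\rho_f S_f^2 K_f^3\|\bfD(\bfu_m)\|_{\Omega_f}^3$, which is precisely why the smallness hypothesis \eqref{smalldatacond} is already needed at the level of the first estimate, contrary to your claim that ``this level of estimate is fully linear''. For the same reason, in the second (time-differentiated) estimate \emph{both} convective terms survive and together contribute $2\rho_f S_f^2 K_f^3\|\bfD(\bfu_m)\|_{\Omega_f}\|\bfD(\dot\bfu_m)\|_{\Omega_f}^2$; the factor $2$ is what makes the threshold $\mu_f/(3\rho_f S_f^2 K_f^3)$ and the constant $9$ in \eqref{smalldatacond} come out as stated.

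This error forces a change in the logical flow: since the cubic term already appears in the first estimate, the continuation argument for \eqref{ineq:mainresult2} and the derivation of \eqref{ineq:mainresult1} are intertwined from the outset, exactly as the paper organizes them (proving \eqref{mainbound1} on an interval where \eqref{smallness} holds, then showing by contradiction that \eqref{smallness} persists on all of $[0,T_m]$). A secondary point: for the limit passage in the nonlinearity the paper uses strong convergence in $L^4(0,T;\bfL^4(\Omega_f))$ via Theorem~\ref{Simon}, not merely $L^2(0,T;\bfL^2(\Omega_f))$; the stronger space is what pairs cleanly with the weak $L^2(0,T;\Lbb^2(\Omega_f))$ convergence of $\nabla\bfu_m$.
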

The constants $T_1-T_5, K_f, S_f, P_1, P_3$ used in the above estimates are defined in Section \ref{usefulresults}.
\section{Proof of Theorem \ref{mainresult}.}
The proof consists of multiple steps. The main idea is to use Galerkin's method on the divergence-free version of the weak problem {\bf{(WF1)}} in which the fluid pressure $p_f$ is eliminated. We will first present the divergence-free formulation {\bf (WF2)} and introduce its Galerkin approximation {\bf{(GF)}}. Next we prove that there exists a unique maximal Galerkin solution by writing {\bf{(GF)}} as a system of first order equations and applying the theory of ordinary differential equations. However this existence result holds only on a finite subiniterval of $[0,T]$. Demonstrating a priori bounds for the Galerkin solution guarantees the validity of this existence result on the entire interval $[0,T]$ and also allows us to pass to the limit. At the end of this process, we obtain a solution $\bfu_f, \bfeta$ and $p_p$ of the divergence-free weak formulation. We conclude the proof using an inf-sup condition to recover the fluid pressure $p_f$ that was eliminated from the weak formulation, proving the equivalence of {\bf{(WF2)}} and {\bf{(WF1)}}. This last step also provides a priori estimates for $p_f$.

\subsection{A divergence-free weak formulation.}
For the analysis of the problem, we will focus on the following divergence free version of the formulation {\bf{(WF1)}}.\\\\
{\bf (WF2)} Find $\bfu_f\in L^{\infty}(0,T;\bfL^2(\Omega_f))\cap L^2(0,T;\bfX_f)$, $\bfeta\in W^{1,\infty}(0,T;\bfL^2(\Omega_p))\times H^1(0,T;\bfX_p)$ and $p_p\in L^{\infty}(0,T;L^2(\Omega_p))\cap L^2(0,T;Q_p)$  with \\$\bfudot_f\in L^1(0,T;\bfL^{\frac32}(\Omega_f))$  such that for all $\bfv\in \bfV_f$, $\bfxi\in \bfX_p$ and $r\in Q_p$, 
\begin{align*}
&(\rho_f\dot{\bfu}_f,\bfv)_{\Omega_f}+(2\mu_f\bfD(\bfu_f),\bfD(\bfv))_{\Omega_f}+\rho_f(\bfu_f\cdot\nabla \bfu_f,\bfv)_{\Omega_f}
\\
&+(\rho_s\ddot{\bfeta},\bfxi)_{\Omega_p}+(2\mu_s\bfD(\bfeta),\bfD(\bfxi))_{\Omega_p}+(\lambda_s\nabla\cdot \bfeta,\nabla\cdot\bfxi)_{\Omega_p}-(\alpha p_p,\nabla\cdot \bfxi)_{\Omega_p}\\
&+(s_0\dot{p_p}+\alpha\nabla\cdot \dot{\bfeta},r)_{\Omega_p}+(\bfK\nabla p_p,\nabla r)_{\Omega_p}
\\
&+\langle p_p\bfn_\Gamma,\bfv-\bfxi\rangle_{\Gamma_I}+\Sigma_{l=1}^{d-1}\langle\beta(\bfu_f-\bfetadot)\cdot \bft_{\Gamma}^l,(\bfv-\bfxi)\cdot \bft_{\Gamma}^l\rangle_{\Gamma_I}+\langle(\bfetadot-\bfu_f)\cdot\bfn_{\Gamma},r\rangle_{\Gamma_I}\\
&\hspace{3.0cm}=-\langle P_{in}(t)\bfn_{\Gamma},\bfv\rangle_{\Gamma_f^{in}}+(\bff_f,\bfv)_{\Omega_f}+(\bff_s,\bfxi)_{\Omega_p}+(f_p,r)_{\Omega_p}
\end{align*}
a.e. in $(0,T)$, and 
\[
\bfu_f(0)=\bfzero \mbox{ a.e. in }\Omega_f, \quad \bfeta(0)=\bfzero,\quad \dot{\bfeta}(0)=\bfzero, \quad p_p(0)=0 \mbox{ a.e. in }\Omega_p.
\]
Note that the unknown pressure $p_f$ is no longer in the weak formulation. Furthermore, it is obvious that any solution of {\bf{(WF1)}} is a solution of {\bf{(WF2)}}. The converse will be proved in Section \ref{sec:NSEpressure} using an inf-sup condition.
\subsection{A semi-discrete Galerkin formulation of {\bf{(WF2)}}.}
The existence result is proved by constructing a sequence of approximate problems and then passing to the limit, that is, using the Galerkin method. 
Separability of $\bfV_f\times \bfX_p\times Q_p$ implies the existence of a basis $\{(\bfv_i, \bfxi_i, r_i)\}_{i\geq 0}$ consisting of smooth functions. 
We define 
\begin{align*}
\bfV_f^m&= \rmspan\{\bfv_i:i=1,\hdots,m\}, \\\bfX_p^m&=\rmspan\{\bfxi_i:i=1,\hdots,m\}, \\ Q_p^m&=\rmspan\{r_i:i=1,\hdots,m\},\end{align*}
and use the following Galerkin approximations for the unknowns $\bfu_f, \bfeta$ and $p_p$:
\begin{equation}\label{galerkinexpansion}
\bfu_m(\bfx,t)=\sum_{j=1}^m \alpha_j(t)\bfv_j(\bfx), \bfeta_m(\bfx,t)=\sum_{j=1}^m \beta_j(t)\bfxi_j(\bfx), p_m(\bfx,t)=\sum_{j=1}^m \gamma_j(t)r_j(\bfx). 
\end{equation}
Then, we can write the Galerkin approximation of the problem (WF2) as follows: \\
{\bf (GF)}  Find $\bfu_m\in \calC^{1}(0,T;\bfV_f^m)$, $\bfeta_m\in \calC^2(0,T;\bfX_p^m)$, $p_m\in \calC^1(0,T;Q_p^m)$ such that
\begin{align*}
(\rho_f\bfudot_m,\bfv)_{\Omega_f}&+(2\mu_f\bfD(\bfu_m),\bfD(\bfv))_{\Omega_f}+\rho_f(\bfu_m\cdot\nabla \bfu_m,\bfv)_{\Omega_f}+(\rho_s\ddot{\bfeta}_m,\bfxi)_{\Omega_p}\\
&
+(2\mu_s\bfD(\bfeta_m),\bfD(\bfxi))_{\Omega_p}
+(\lambda_s\nabla\cdot \bfeta_m,\nabla\cdot\bfxi)_{\Omega_p}-(\alpha p_m,\nabla\cdot \bfxi)_{\Omega_p}\\
&+(s_0\dot{p}_m+\alpha\nabla\cdot \bfetadot_m,r)_{\Omega_p}+(\bfK\nabla p_m,\nabla r)_{\Omega_p}+\langle p_m\bfn_\Gamma,\bfv-\bfxi\rangle_{\Gamma_I}
\\
&+\Sigma_{l=1}^{d-1}\langle\beta(\bfu_m-\bfetadot_m)\cdot \bft_{\Gamma}^l,(\bfv-\bfxi)\cdot \bft_{\Gamma}^l\rangle_{\Gamma_I}+\langle(\bfetadot_m-\bfu_m)\cdot\bfn_{\Gamma},r\rangle_{\Gamma_I}\\
&\hspace{1.8cm}=-\langle P_{in}(t)\bfn_{\Gamma},\bfv\rangle_{\Gamma_f^{in}}+(\bff_f,\bfv)_{\Omega_f}+(\bff_s,\bfxi)_{\Omega_p}+(f_p,r)_{\Omega_p},
\end{align*}
for all $(\bfv, \bfxi, r)\in \bfV_f^m\times \bfX_p^m\times Q_p^m$, a.e. $t \in (0,T)$
and 
\begin{equation}\label{GalerkinIC}
\bfu_m(0)=\bfzero, \quad \bfeta_m(0)=\bfzero, \quad \bfetadot_m(0)=\bfzero,\quad  p_m(0)=0.
\end{equation}
\begin{lemma}
For each positive integer $m$, the formulation {\bf (GF)}  has a unique maximal solution $(\bfu_m,\bfeta_m,p_m)\in \calC^1(0,T_m;\bfV_f^m)\times \calC^2(0,T_m;\bfX_p^m)\times \calC^1(0,T_m;Q_p^m)$ for some time $T_m$ where $0<T_m\leq T$.
\end{lemma}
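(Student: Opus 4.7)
The plan is to reduce the Galerkin problem \textbf{(GF)} to a system of ordinary differential equations for the coefficient vectors $\bfalpha = (\alpha_1, \dots, \alpha_m)$, $\bfbeta = (\beta_1, \dots, \beta_m)$, $\bfgamma = (\gamma_1, \dots, \gamma_m)$ and then invoke the classical Cauchy--Lipschitz (Picard--Lindel\"of) theorem. Inserting the expansions \eqref{galerkinexpansion} into \textbf{(GF)} and successively choosing test functions $\bfv = \bfv_i$, $\bfxi = \bfxi_i$, $r = r_i$ produces, for $i = 1, \dots, m$, a system in which the only time derivatives are $\dot{\bfalpha}$, $\ddot{\bfbeta}$, $\dot{\bfgamma}$ (the term $\alpha \nabla\cdot\bfetadot_m$ in the pressure equation contains only $\dot{\bfbeta}$, not $\ddot{\bfbeta}$, so it lives on the ``state'' side).

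To cast this as a first-order system I would introduce $\bfmu = \dot{\bfbeta}$ as an auxiliary unknown and set $y = (\bfalpha, \bfbeta, \bfmu, \bfgamma)^\top$. Grouping the time-derivative terms on the left and everything else on the right, the system takes the form
\[
\mathcal{M}\, \dot{y} \;=\; \mathcal{F}(t, y),
\]
where the block mass matrix
\[
\mathcal{M} \;=\; \begin{pmatrix} M_u & 0 & 0 & 0 \\ 0 & I & 0 & 0 \\ 0 & 0 & M_\eta & 0 \\ 0 & 0 & 0 & M_p \end{pmatrix},\qquad (M_u)_{ij} = \rho_f(\bfv_i,\bfv_j)_{\Omega_f},\; (M_\eta)_{ij}=\rho_s(\bfxi_i,\bfxi_j)_{\Omega_p},\; (M_p)_{ij}=s_0(r_i,r_j)_{\Omega_p},
\]
is symmetric positive definite because $\{\bfv_i\}$, $\{\bfxi_i\}$ and $\{r_i\}$ are linearly independent basis families and $\rho_f, \rho_s, s_0 > 0$. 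Inverting $\mathcal{M}$ gives the explicit system $\dot{y} = \mathcal{M}^{-1} \mathcal{F}(t, y) =: G(t, y)$, together with the initial condition $y(0) = 0$ coming from \eqref{GalerkinIC}.

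It remains to check the hypotheses of Picard--Lindel\"of for $G$. All terms that enter $\mathcal{F}$ linearly in $y$ (the viscous, elastic, storage, Darcy, interface and boundary coupling terms) produce constant or time-independent coefficients in $y$. The only nonlinear contribution is the convective term $\rho_f(\bfu_m\cdot\nabla\bfu_m,\bfv_i)_{\Omega_f}$, which is a homogeneous quadratic polynomial in $\bfalpha$, hence smooth and locally Lipschitz in $y$ on bounded sets. The time dependence enters only through $\bff_f, \bff_s, f_p$ and $P_{in}$; under the Theorem~\ref{mainresult} assumption that these data lie in $H^1(0,T;\cdot)$ they embed continuously into $\calC([0,T];\cdot)$, so $G$ is continuous in $t$ and locally Lipschitz in $y$. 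Applying the classical Cauchy--Lipschitz theorem then yields a unique maximal $\calC^1$ solution $y$ on some interval $[0, T_m)$ with $0 < T_m \leq T$. Reading back the components, $\bfalpha, \bfgamma \in \calC^1([0,T_m))$ and $\bfbeta \in \calC^2([0,T_m))$ since $\dot{\bfbeta} = \bfmu \in \calC^1$, which gives exactly the claimed regularity $\bfu_m \in \calC^1(0,T_m;\bfV_f^m)$, $\bfeta_m \in \calC^2(0,T_m;\bfX_p^m)$ and $p_m \in \calC^1(0,T_m;Q_p^m)$.

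I do not expect a real obstacle here: this is a routine reduction-to-ODE argument. The only spot that warrants care is verifying that the reformulation into first-order form genuinely has a positive definite leading matrix — specifically, confirming that the cross-coupling term $(\alpha \nabla\cdot\bfetadot_m, r)_{\Omega_p}$ contributes to $\mathcal{F}(t,y)$ rather than to $\mathcal{M}\dot{y}$, which holds precisely because $\bfmu = \dot{\bfbeta}$ has been promoted to a state variable. The fact that $T_m$ need not equal $T$ is of course expected; the forthcoming \emph{a priori} bounds are what will later preclude blow-up and extend the solution to the whole of $[0,T]$.
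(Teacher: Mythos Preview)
Your proposal is correct and follows essentially the same route as the paper: write \textbf{(GF)} as a first-order ODE system by introducing the auxiliary variable $\dot{\bfbeta}$, observe that the block-diagonal mass matrix (with Gram blocks $\rho_f(\bfv_i,\bfv_j)$, $\rho_s(\bfxi_i,\bfxi_j)$, $s_0(r_i,r_j)$ and the identity) is symmetric positive definite and hence invertible, and then apply the local existence theory for ODEs with a right-hand side that is continuous in $t$ and locally Lipschitz in the state (the only nonlinearity being the quadratic convective term). Your remark that the $(\alpha\nabla\cdot\bfetadot_m,r)_{\Omega_p}$ term lands on the state side once $\dot{\bfbeta}$ is a variable is exactly the point that makes the mass matrix block diagonal, and the paper handles this the same way.
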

\begin{proof}
Using the Galerkin expansions given in \eqref{galerkinexpansion}, the problem {(\bf GF)} can be represented in matrix form. The following is a standard finite-dimensional argument which is basically defining the problem as a square first order system of ordinary differential equations (ODE) with an initial condition.
For the integrals on the left hand side for $1\leq i, j\leq m$, we define 
\[
\bfA^f_{ij}=\rho_f(\bfv_j,\bfv_i)_{\Omega_f}, \quad \bfB_{ij}^f=2\mu_f(\bfD(\bfv_j),\bfD(\bfv_i))_{\Omega_f}+\Sigma_{l=1}^d\beta\langle \bfv_j\cdot \bft_{\Gamma}^l, \bfv_i\cdot \bft_{\Gamma}^l \rangle_{\Gamma_I}, 
\]
\[
\bfN_i=(\rho_f(\bfv_j\cdot \nabla \bfv_k, \bfv_i)_{\Omega_f})_{1\leq j,k\leq m},
\]
\[
\bfA^s_{ij}=\rho_s(\bfxi_j,\bfxi_i)_{\Omega_p},\quad \bfB_{ij}^s=2\mu_s(\bfD(\bfxi_j),\bfD(\bfxi_i))_{\Omega_p}+\lambda_s(\nabla \cdot\bfxi_j,\nabla \cdot\bfxi_i)_{\Omega_p}, 
\]
\[
\bfA^p_{ij}=s_0(r_j,r_i)_{\Omega_p},\quad \bfB_{ij}^p=(\bfK\nabla r_j,\nabla r_i)_{\Omega_p}, 
\]
\[
\bfC_{ij}=\alpha(r_j,\nabla \cdot\bfxi_i)_{\Omega_p}+\langle r_j\bfn_{\Gamma},\bfxi_i\rangle_{\Gamma_I},\quad \bfD_{ij}=\langle r_j\bfn_{\Gamma},\bfv_i\rangle_{\Gamma_I},
\]
\[
\bfE_{ij}=\Sigma_{l=1}^d\beta\langle \bfxi_j\cdot \bft_{\Gamma}^l, \bfv_i\cdot \bft_{\Gamma}^l \rangle_{\Gamma_I}, \bfF_{ij}=\Sigma_{l=1}^d\beta\langle \bfxi_j\cdot \bft_{\Gamma}^l, \bfxi_i\cdot \bft_{\Gamma}^l \rangle_{\Gamma_I}.
\]
And finally for the right hand side integrals we define
\[
\bfa_i=-\langle P_{in}(t)\bfn_f,\bfv_i\rangle_{\Omega_f}+(f_f,\bfv_i)_{\Omega_f},\quad \bfb_i=(\bff_s,\bfxi_i)_{\Omega_p}, \quad \bfc_i=(f_p,r_i)_{\Omega_p}.
\]
The unknowns are $\bfalpha_i=\alpha_i(t)$, $\bfbeta_i=\beta_i(t)$ and $\bfgamma_i=\gamma_i(t)$, $i=1,\hdots, m$ and we define a vector that holds these unknowns and $\bftheta = \dot{\bfbeta}$ as follows :
\[
\bfw(t)=\left[
\begin{array}{c}
\bfalpha(t)\\\bfbeta(t)\\\bfgamma(t)\\\bftheta(t)
\end{array}
\right]
\]
and set $(\calN(\bfw))_i=\bfN_i\bfalpha\cdot \bfalpha$.
With these definitions, ${(\bf GF)}$ is equivalent to finding $\bfalpha, \bfbeta, \bfgamma$ such that
\begin{align*}
\bfA^f\dot{\bfalpha}+\bfB^f\bfalpha+\calN(\bfw)+\bfD\bfgamma-\bfE\dot{\bfbeta}=\bfa
\\
\bfA^s\ddot{\bfbeta}+\bfB^s\bfbeta-\bfC\bfgamma-\bfE^T\bfalpha+\bfF\dot{\bfbeta}=\bfb\\
\bfA^p\dot{\bfgamma}+\bfB^p\bfgamma+\bfC^{T}\dot{\bfbeta}-\bfD^T\bfalpha=\bfc
\end{align*}
where $
\bfalpha(0),  \bfbeta(0)$ and $\bfgamma(0)$ are given.
We can rewrite this as a system of first order equations as follows:
\[
\bfM\dot{\bfw}+\bfN\bfw=\bfd(\bfw),
\]
where
\[
\bfM=\left[
\begin{array}{cccc}
\bfA^f & \bfzero & \bfzero&\bfzero\\
\bfzero & \bfI& \bfzero&\bfzero\\
\bfzero & \bfzero & \bfA^p&\bfzero\\
\bfzero & \bfzero & \bfzero&\bfA^s\\
\end{array}
\right],\quad \bfN=\left[
\begin{array}{cccc}
\bfB^f& \bfzero & \bfD&-\bfE\\
\bfzero  & \bfzero & \bfzero &-\bfI\\
-\bfD^T & \bfzero & \bfB^p&\bfC^T\\
-\bfE^T & \bfB^s & -\bfC&\bfzero\\
\end{array}
\right]
\]
and 
\[
 \bfd(\bfw)=\left[\begin{array}{c}
\bfa-\calN(\bfw)\\\bfzero\\\bfc\\\bfb
\end{array}\right].
\]
Since $\rho_f, \rho_s$ and $s_0$ are positive, $\bfA^f, \bfA^p$ and $\bfA^s$ are symmetric positive definite implying that $\bfM$ is invertible. This defines an autonomous ODE in 
$\bfw(t)$ such that 
\[
\dot{\bfw}=\bfM^{-1}\bfd(\bfw)-\bfM^{-1}\bfN\bfw=:g(\bfw), \quad \mbox{ where } \bfw(0) \mbox{ is given.}
\]
The matrices $\bfM, \bfN$ are $4m\times 4m$ and the  vectors $\bfd, \bfw$ have length $4m$.
It is obvious that the function $g$ is continuous in time and locally Lipschitz continuous in $\bfw$. Then, it follows from the theory of ordinary differential equations \cite{CoddingtonLevinson} that there is a unique maximal solution $\bfw$ in the interval $[0,T_m]$ for some $T_m$ such that $0<T_m\leq T$ such that each component of $\bfw$, i.e., each component of $\bfalpha, \bfbeta, \bfgamma$ and $\bftheta=\dot{\bfbeta}$ belongs to $\calC^1(0,T_m).$ 
\end{proof}
We need a priori bounds on the Galerkin solution to conclude that $T_m=T$. We discuss this next in Section \ref{sec:apriori}. 
\begin{remark}
Note that if we consider the Stokes problem for the fluid part, so if there is no nonlinearity, an existence and uniqueness result will be global on $[0,T]$.
\end{remark}
\subsection{A priori estimates for the Galerkin solution.}\label{sec:apriori}
We begin by stating the main result of this section. 
\begin{theorem}
Suppose that $\bff_f\in H^1(0,T;\bfL^2(\Omega_f))$, $\bff_s\in H^1(0,T;\bfL^2(\Omega_p))$, $\bff_p\in H^1(0,T;L^2(\Omega_p))$ and $P_{in}\in H^1(0,T;H^{1/2}_{00}(\Gamma_f^{in}))$.  In addition, assume that the small data condition \eqref{smalldatacond} holds.
Then, problem {\bf (GF)} has a unique solution $(\bfu_m, \bfeta_m, p_m)$in the interval $[0,T]$. Furthermore, it satisfies the following bounds:
\begin{align}\label{mainbound1}
\dfrac{\rho_f}2\|\bfu_m\|_{\Omega_f}^2
+\frac{\rho_s}2\|\dot{\bfeta}_m\|_{\Omega_p}^2+\mu_s\|\bfD(\bfeta_m)\|^2_{\Omega_p}+\frac{\lambda_s}2\|\nabla\cdot \bfeta_m\|^2_{\Omega_p}+\frac{s_0}2\|p_m\|_{\Omega_p}^2\nonumber\\
+\dfrac12\|\bfK^{1/2}\nabla p_m\|^2_{L^2(0,T;L^2(\Omega_p))}
+\mu_f\|\bfD(\bfu_m)\|^2_{L^2(0,T;L^2(\Omega_f))}\nonumber\\
\leq \left(1+\dfrac{T}{\rho_s}e^{T/\rho_s}\right)\|\calC_1\|^2_{L^2(0,T)}
\end{align}
for all $t\in [0,T]$,
\begin{align}
\|\bfD(\bfu_m)\|_{\Omega_f}&<\dfrac{\mu_f}{3\rho_fS_f^2K_f^3},\label{Dumbound}
\end{align}
and 
\begin{align}
\dfrac{\rho_f}2\|\dot{\bfu}_m\|^2_{\Omega_f}
&+\dfrac{\rho_s}2\|\ddot{\bfeta}_m\|^2_{\Omega_p}+\mu_s\|\bfD(\dot{\bfeta}_m)\|^2_{\Omega_p}+\dfrac{\lambda_s}2\|\nabla\cdot \dot{\bfeta}_m\|^2_{\Omega_p}
+\dfrac{s_0}2\|\dot{p}_m\|^2_{\Omega_p}\nonumber\\
&\hspace{2cm}+\mu_f\|\bfD(\dot{\bfu}_m)\|^2_{L^2(0,T;L^2(\Omega_f))}+\dfrac12\|\bfK^{1/2}\nabla \dot{p}_m\|^2_{L^2(0,T;L^2(\Omega_p))}\nonumber
\\
&\leq (1+\dfrac{T}{\rho_s}e^{2T/\rho_s^2})\|\calC_2\|^2_{L^2(0,T)}+\dfrac{Te^{2T/\rho_s^2}}2\calC_3.\label{mainbound2}
\end{align}
Here $\calC_1, \calC_2$ and $\calC_3$ are defined in \eqref{Cs}.
\end{theorem}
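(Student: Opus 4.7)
The plan is to run two energy estimates on the finite-dimensional Galerkin system, together with a continuation (bootstrap) argument to show that the solution exists on all of $[0,T]$ and to absorb the nonlinear convection term.

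\textbf{Energy estimate for the solution itself.} First I would test the Galerkin equation \textbf{(GF)} with the natural tuple $(\bfv,\bfxi,r)=(\bfu_m,\dot\bfeta_m,p_m)\in \bfV_f^m\times \bfX_p^m\times Q_p^m$. The time-derivative terms produce $\tfrac{d}{dt}$ of the quadratic energy on the left of \eqref{mainbound1}, the symmetric bilinear forms give $2\mu_f\|\bfD(\bfu_m)\|_{\Omega_f}^2+\|\bfK^{1/2}\nabla p_m\|_{\Omega_p}^2$, and crucially the interface terms cancel in pairs: $\langle p_m\bfn_\Gamma,\bfu_m-\dot\bfeta_m\rangle_{\Gamma_I}$ cancels against $\langle(\dot\bfeta_m-\bfu_m)\cdot\bfn_\Gamma,p_m\rangle_{\Gamma_I}$, while the Biot coupling term $-(\alpha p_m,\nabla\cdot\dot\bfeta_m)_{\Omega_p}$ cancels against $(\alpha\nabla\cdot\dot\bfeta_m,p_m)_{\Omega_p}$. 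The Beavers--Joseph--Saffman term becomes the nonnegative dissipation $\beta\sum_l\|(\bfu_m-\dot\bfeta_m)\cdot\bft_\Gamma^l\|_{\Gamma_I}^2$. After these cancellations I would estimate the right-hand side by Cauchy--Schwarz, Young, trace \eqref{trace1}, Poincar\'e \eqref{poincare1}--\eqref{poincare3} and Korn \eqref{korn}, producing the data quantity $\calC_1(t)^2$; integrating in time and applying Gronwall \eqref{Gronwall} to the $\tfrac{\rho_s}{2}\|\dot\bfeta_m\|_{\Omega_p}^2$ term (which is not directly controlled by a dissipation term and so needs the $e^{T/\rho_s}$ factor) yields \eqref{mainbound1}, \emph{provided} the nonlinear contribution can be absorbed.

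\textbf{Handling the convection term and the bootstrap.} The term $\rho_f(\bfu_m\cdot\nabla\bfu_m,\bfu_m)_{\Omega_f}$ does not vanish because $\bfu_m$ has nonzero trace on $\Gamma_f^{in}\cup\Gamma_f^{out}\cup\Gamma_I$; using divergence-freeness and boundary conditions one rewrites it and bounds it by H\"older, the Sobolev embedding \eqref{sobolev} and Korn \eqref{korn} as $\rho_f S_f^2 K_f\,\|\bfD(\bfu_m)\|_{\Omega_f}\cdot 2\mu_f\|\bfD(\bfu_m)\|_{\Omega_f}^2/(2\mu_f)$. Under the pointwise hypothesis $\|\bfD(\bfu_m)\|_{\Omega_f}<\mu_f/(3\rho_f S_f^2 K_f^3)$ this is at most $\tfrac{2}{3}\mu_f\|\bfD(\bfu_m)\|_{\Omega_f}^2$, leaving a positive coefficient in front of $\|\bfD(\bfu_m)\|_{\Omega_f}^2$. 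I would set $T^\ast=\sup\{t\in[0,T_m]:\|\bfD(\bfu_m(s))\|_{\Omega_f}<\mu_f/(3\rho_f S_f^2 K_f^3)\ \forall s\le t\}$. For $t<T^\ast$ the closed estimate above holds and yields a bound on $\|\bfD(\bfu_m)\|_{L^\infty(0,t;\Lbb^2)}$ whose square, by the small data condition \eqref{smalldatacond}, stays strictly below $(\mu_f/(3\rho_f S_f^2 K_f^3))^2$; by continuity this means $T^\ast$ cannot be an interior point of $[0,T_m]$, hence $T^\ast=T_m$ and \eqref{Dumbound} holds throughout the existence interval. The $L^\infty$-in-time bound on the Galerkin coefficients rules out finite-time blow-up, forcing $T_m=T$.

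\textbf{Time-derivative estimate.} For \eqref{mainbound2} I would differentiate \textbf{(GF)} once in $t$ (legitimate since $\bfalpha,\bfbeta,\bfgamma\in\calC^1$ and in fact smooth by bootstrapping the ODE), and test with $(\dot\bfu_m,\ddot\bfeta_m,\dot p_m)$. The same cancellations recur. The only new nonlinear term is $\rho_f(\dot\bfu_m\!\cdot\!\nabla\bfu_m,\dot\bfu_m)_{\Omega_f}+\rho_f(\bfu_m\!\cdot\!\nabla\dot\bfu_m,\dot\bfu_m)_{\Omega_f}$, which is again absorbed into $\mu_f\|\bfD(\dot\bfu_m)\|_{\Omega_f}^2$ using \eqref{Dumbound} with exactly the same threshold. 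The data quantity on the right becomes $\calC_2(t)^2$. The initial values $\dot\bfu_m(0),\ddot\bfeta_m(0),\dot p_m(0)$ are not zero but are determined by evaluating \textbf{(GF)} at $t=0$ with the vanishing initial data; testing that identity against $(\dot\bfu_m(0),\ddot\bfeta_m(0),\dot p_m(0))$ and using the continuous lifting operator (constant $C_j$) to handle $\langle P_{in}(0)\bfn_\Gamma,\dot\bfu_m(0)\rangle_{\Gamma_f^{in}}$ yields exactly $\calC_3$ as the bound on the initial energy. A final Gronwall gives \eqref{mainbound2}.

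\textbf{Main obstacle.} The genuine difficulty is the simultaneous presence of the Navier--Stokes convection term and the absence of a direct dissipation mechanism controlling $\dot\bfeta_m$: the latter forces a Gronwall factor $e^{T/\rho_s}$, which in turn inflates the right-hand side that must stay below the cube of the Sobolev--Korn threshold needed to absorb the convection. Balancing these two effects is precisely what dictates the form of the small-data condition \eqref{smalldatacond}; the bootstrap must be set up so that the enlarged right-hand side still lies strictly below $(\mu_f/(3\rho_f S_f^2 K_f^3))^2$, which is why the exponential factor $e^{T/\rho_s}$ appears with the same coefficient on both sides of \eqref{smalldatacond}.
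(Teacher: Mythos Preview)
Your overall strategy---energy identity, cancellation of interface terms, Gronwall on $\|\dot{\bfeta}_m\|_{\Omega_p}^2$, and a bootstrap/continuation argument for the convection term---matches the paper. However, there is a genuine gap in the way you close the bootstrap.

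You write that on $[0,T^\ast)$ ``the closed estimate above holds and yields a bound on $\|\bfD(\bfu_m)\|_{L^\infty(0,t;\Lbb^2)}$''. But the closed estimate you derive is \eqref{mainbound1}, and that inequality controls only $\|\bfD(\bfu_m)\|_{L^2(0,T;\Lbb^2(\Omega_f))}^2$, not the $L^\infty$-in-time norm. The basic energy identity gives $\mu_f\|\bfD(\bfu_m)\|_{\Omega_f}^2$ as a \emph{dissipation} term, not as a $\tfrac{d}{dt}$ term, so integrating in $t$ yields no pointwise bound on $\|\bfD(\bfu_m)(t)\|_{\Omega_f}$. Consequently your continuation argument cannot close: you have nothing that forces $\|\bfD(\bfu_m)(T^\ast)\|_{\Omega_f}$ to remain strictly below the threshold.

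The paper repairs this by intertwining the two estimates rather than stacking them sequentially. From the basic energy identity one keeps $2\mu_f\|\bfD(\bfu_m)\|_{\Omega_f}^2$ on the left but does \emph{not} integrate; instead one applies Cauchy--Schwarz/Young to the terms $\rho_f(\dot{\bfu}_m,\bfu_m)_{\Omega_f}$, $\rho_s(\ddot{\bfeta}_m,\dot{\bfeta}_m)_{\Omega_p}$, $2\mu_s(\bfD(\bfeta_m),\bfD(\dot{\bfeta}_m))_{\Omega_p}$, $\lambda_s(\nabla\cdot\bfeta_m,\nabla\cdot\dot{\bfeta}_m)_{\Omega_p}$, $s_0(\dot p_m,p_m)_{\Omega_p}$, moving them to the right. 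This produces a \emph{pointwise} inequality of the form
\[
2\mu_f\|\bfD(\bfu_m)(t)\|_{\Omega_f}^2 \;\le\; \tfrac{\rho_f}{2}\|\dot{\bfu}_m\|_{\Omega_f}^2+\tfrac{\rho_s}{2}\|\ddot{\bfeta}_m\|_{\Omega_p}^2+\mu_s\|\bfD(\dot{\bfeta}_m)\|_{\Omega_p}^2+\cdots+\calC_1^2(t)+\cdots,
\]
whose right-hand side involves the time-derivative quantities $\dot{\bfu}_m,\ddot{\bfeta}_m,\dot p_m$. These are exactly what \eqref{mainbound2} bounds. Thus the bootstrap must be run with \emph{both} estimates derived simultaneously on $[0,T^\ast]$; combining them through the pointwise inequality above is what forces $\|\bfD(\bfu_m)(T^\ast)\|_{\Omega_f}$ strictly below the threshold and yields the contradiction. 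This is also why the small-data condition \eqref{smalldatacond} involves $\calC_2$ and $\calC_3$ (time derivatives and initial values of the data), not just $\calC_1$: you cannot obtain \eqref{Dumbound} from \eqref{mainbound1} alone.
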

\begin{remark} \label{galerkinbound} 
These bounds imply that $\{\bfu_m\}$ is bounded in $H^1(0,T;\bfH^1(\Omega_f))$, $\{\bfeta_m\}$ is bounded in $H^1(0,T;\bfH^1(\Omega_p))$, $\ddot{\bfeta}_m$ is bounded in $L^{\infty}(0,T;\bfL^2(\Omega_p))$ and
$\{p_m\}$ is bounded in $H^1(0,T;H^1(\Omega_p))$. 
\end{remark}
In the next few sections, we verify the bounds \eqref{mainbound1}, \eqref{Dumbound} and \eqref{mainbound2} in the interval $[0,T_m]$ which will then imply the global existence of the maximal solution $(\bfu_m,\bfeta_m,p_m)$ in the interval $[0,T]$ as stated in the theorem.
\subsubsection{Proof of \eqref{mainbound1}.}
We let $\bfv=\bfu_m, \bfxi=\dot{\bfeta}_m$, $r=p_m$ in the Galerkin formulation {(\bf GF)}. Then the Cauch-Schwarz inequality, inequalities \eqref{trace1}, \eqref{poincare1}, \eqref{poincare3}, \eqref{sobolev}, \eqref{korn} and assumption \eqref{KSPD} on $\bfK$ imply (after neglecting the $\beta$ term which is nonnegative) 
\begin{align}
&\rho_f(\dot{\bfu}_m,\bfu_m)_{\Omega_f}+2\mu_f\|\bfD(\bfu_m)\|^2_{\Omega_f}
+\rho_s(\ddot{\bfeta}_m, \dot{\bfeta}_m)_{\Omega_p}+2\mu_s(\bfD(\bfeta_m), \bfD(\dot{\bfeta}_m))_{\Omega_p}\nonumber\\
&\hspace{3cm}+\lambda_s(\nabla\cdot \bfeta_m, \nabla\cdot \dot{\bfeta}_m)_{\Omega_p}+s_0(\dot{p}_m,p_m)_{\Omega_p}+\|\bfK^{1/2}\nabla p_m\|^2_{\Omega_p}\nonumber\\
&\leq \rho_fS_f^2K_f^3\|\bfD(\bfu_m)\|_{\Omega_f}^3+T_2K_f\|P_{in}(t)\|_{\Gamma_{f}^{in}}\|\bfD(\bfu_m)\|_{\Omega_f}\label{dum}\\
&\hspace{0.7cm}+P_1K_f\|\bff_f\|_{\Omega_f}\|\bfD(\bfu_m)\|_{\Omega_f}+\|\bff_s\|_{\Omega_p}\|\dot{\bfeta}_m\|_{\Omega_p}+\dfrac{P_3}{K_{\min}^{1/2}}\|f_p\|_{\Omega_p}\|\bfK^{1/2}\nabla p_m\|_{\Omega_p}\nonumber.
\end{align}
Here the only problematic terms on the right hand side are $\|\bfD(\bfu_m)\|_{\Omega_f}^3$ and $\|\dot{\bfeta}_m\|_{\Omega_p}$. The rest can easily be hidden in the left hand side. Observe that since $\bfu_m(0)=\bfzero$ and $\bfu_m$ is continuous, there exists a time $\overline{T}_m$, $0<\overline{T}_m\leq T_m$ such that 
\begin{equation}\label{smallness}
\|\bfD(\bfu_m)\|_{\Omega_f}<\dfrac{\mu_f}{3\rho_fS_f^2K_f^3}\quad \forall t\in [0,\overline{T}_m].
\end{equation} 
In fact, this condition holds true on $[0,T_m]$. For the sake of presentation, we postpone this proof to Section \ref{smallnessproof}. Using this condition together with Young's inequality with $\epsilon>0$, we obtain
\begin{align*}
&\rho_f(\dot{\bfu}_m,\bfu_m)_{\Omega_f}+2\mu_f\|\bfD(\bfu_m)\|^2_{\Omega_f}
+\rho_s(\ddot{\bfeta}_m, \dot{\bfeta}_m)_{\Omega_p}+2\mu_s(\bfD(\bfeta_m), \bfD(\dot{\bfeta}_m))_{\Omega_p}\nonumber\\
&+\lambda_s(\nabla\cdot \bfeta_m, \nabla\cdot \dot{\bfeta}_m)_{\Omega_p}+s_0(\dot{p}_m,p_m)_{\Omega_p}+\|\bfK^{1/2}\nabla p_m\|^2_{\Omega_p}
\nonumber\\&\leq \dfrac{\mu_f}3\|\bfD(\bfu_m)\|_{\Omega_f}^2+\dfrac{\epsilon T_2^2K_f^2}2\|P_{in}(t)\|^2_{\Gamma_{f}^{in}}+\dfrac{1}{2\epsilon}\|\bfD(\bfu_m)\|^2_{\Omega_f}+\dfrac{\epsilon P_1^2K_f^2}2\|\bff_f\|^2_{\Omega_f}\\
&+\dfrac{1}{2\epsilon}\|\bfD(\bfu_m)\|^2_{\Omega_f}+\dfrac12\|\bff_s\|^2_{\Omega_p}+\dfrac12\|\dot{\bfeta}_m\|^2_{\Omega_p}+\dfrac{P_3^2}{2K_{\min}}\|f_p\|_{\Omega_p}^2+\dfrac12\|\bfK^{1/2}\nabla p_m\|_{\Omega_p}^2
\end{align*}
for all $t\in [0,T_m]$.
Picking $\epsilon=\dfrac{3}{2\mu_f}$, we get
\begin{align}
&\rho_f(\dot{\bfu}_m,\bfu_m)_{\Omega_f}+2\mu_f\|\bfD(\bfu_m)\|^2_{\Omega_f}
+\rho_s(\ddot{\bfeta}_m, \dot{\bfeta}_m)_{\Omega_p}+2\mu_s(\bfD(\bfeta_m), \bfD(\dot{\bfeta}_m))_{\Omega_p}\nonumber\\
&+\lambda_s(\nabla\cdot \bfeta_m, \nabla\cdot \dot{\bfeta}_m)_{\Omega_p}+s_0(\dot{p}_m,p_m)_{\Omega_p}+\|\bfK^{1/2}\nabla p_m\|^2_{\Omega_p}
\nonumber
\\&\leq \mu_f\|\bfD(\bfu_m)\|_{\Omega_f}^2+\dfrac12\|\bfK^{1/2}\nabla p_m\|_{\Omega_p}^2+\dfrac{3 T_2^2K_f^2}{4\mu_f}\|P_{in}(t)\|^2_{\Gamma_{f}^{in}}+\dfrac{ 3P_1^2K_f^2}{4\mu_f}\|\bff_f\|^2_{\Omega_f}\nonumber\\
&\hspace{0.5cm}+\dfrac{P_3^2}{2K_{\min}}\|f_p\|_{\Omega_p}^2+\dfrac12\|\bff_s\|^2_{\Omega_p}+\dfrac12\|\dot{\bfeta}_m\|^2_{\Omega_p}=\calC_1^2(t)+\dfrac12\|\dot{\bfeta}_m\|^2_{\Omega_p}\label{energy1}
\end{align}
where $\calC_1$ is defined in \eqref{Cs}.
Integrating with respect to $t$, due to \eqref{GalerkinIC}, we get:
\begin{align}
\frac{\rho_f}2\|\bfu_m\|_{\Omega_f}^2
+\frac{\rho_s}2\|\dot{\bfeta}_m\|_{\Omega_p}^2+\mu_s\|\bfD(\bfeta_m)\|^2_{\Omega_p}+\frac{\lambda_s}2\|\nabla\cdot \bfeta_m\|^2_{\Omega_p}+\frac{s_0}2\|p_m\|_{\Omega_p}^2\nonumber\\
+\mu_f\int_0^t\|\bfD(\bfu_m)\|^2_{\Omega_f}dt+\dfrac12\int_0^t\|\bfK^{1/2}\nabla p_m\|^2_{\Omega_p}dt\nonumber\\\leq \|\calC_1\|^2_{L^2(0,T)}+\dfrac12\int_0^t\|\dot{\bfeta}_m(s)\|^2_{\Omega_p}ds.\label{energy2}
\end{align}
So 
\begin{align*}
\|\dot{\bfeta}_m(t)\|_{\Omega_p}^2
\leq \dfrac2{\rho_s}\|\calC_1\|^2_{L^2(0,T)}+\dfrac1{\rho_s}\int_0^t\|\dot{\bfeta}_m(s)\|^2_{\Omega_p}ds.
\end{align*}
Therefore, since $\bfeta_m\in \mathcal{C}^2(0,T;\bfX_p^m)$, $\|\dot{\bfeta}_m(t)\|_{\Omega_p}^2\in \calC^1(0,T)$ and applying Gronwall's inequality \eqref{Gronwall} with $\zeta(t)=\|\dot{\bfeta}_m(t)\|_{\Omega_p}^2$, $C=\dfrac1{\rho_s}$ and $B=\dfrac2{\rho_s}\|\calC_1\|^2_{L^2(0,T)}$ yields
\begin{align}
\|\dot{\bfeta}_m(t)\|_{\Omega_p}^2\leq \dfrac2{\rho_s}e^{T/\rho_s}\|\calC_1\|_{L^2(0,T)}^2.\label{etapbd}
\end{align}
Plugging this in \eqref{energy1}, we have
\begin{align}
&\rho_f(\dot{\bfu}_m,\bfu_m)_{\Omega_f}+\mu_f\|\bfD(\bfu_m)\|^2_{\Omega_f}
+\rho_s(\ddot{\bfeta}_m, \dot{\bfeta}_m)_{\Omega_p}+2\mu_s(\bfD(\bfeta_m), \bfD(\dot{\bfeta}_m))_{\Omega_p}\nonumber\\
&+\lambda_s(\nabla\cdot \bfeta_m, \nabla\cdot \dot{\bfeta}_m)_{\Omega_p}+s_0(\dot{p}_m,p_m)_{\Omega_p}+\dfrac12\|\bfK^{1/2}\nabla p_m\|^2_{\Omega_p}\nonumber\\
&\hspace{6cm}\leq \calC_1^2(t)+\dfrac1{\rho_s}e^{T/\rho_s}\|\calC_1\|^2_{L^2(0,T)}\label{um1}
\end{align}
or
\begin{align}
\dfrac{\rho_f}2\dfrac{d}{dt}\|\bfu_m\|_{\Omega_f}^2+\mu_f\|\bfD(\bfu_m)\|^2_{\Omega_f}
+\frac{\rho_s}2\frac{d}{dt}\|\dot{\bfeta}_m\|_{\Omega_p}^2+\mu_s\frac{d}{dt}\|\bfD(\bfeta_m)\|^2_{\Omega_p}\nonumber\\
+\frac{\lambda_s}2\frac{d}{dt}\|\nabla\cdot \bfeta_m\|^2_{\Omega_p}+\frac{s_0}2\frac{d}{dt}\|p_m\|_{\Omega_p}^2+\dfrac12\|\bfK^{1/2}\nabla p_m\|^2_{\Omega_p}\nonumber\\
\leq \calC_1^2(t)+\dfrac1{\rho_s}e^{T/\rho_s}\|\calC_1\|^2_{L^2(0,T)}\label{um}
\end{align}
for all $t\in [0,T_m]$.
After integrating with respect to $t$ and using \eqref{GalerkinIC} implies the bound \eqref{mainbound1}.
\subsubsection{Proof of \eqref{Dumbound} and \eqref{mainbound2}.}\label{smallnessproof}
\begin{proof}
Recalling \eqref{smallness}, assume for a contradiction that there exists $T^{*}\in (0,T_m]$ such that 
\begin{equation}
\forall t\in [0,T^{*}), \quad \|\bfD(\bfu_m)(t)\|_{\Omega_f}<\dfrac{\mu_f}{3\rho_fS_f^2K_f^3},\quad \|\bfD(\bfu_m)(T^{*})\|_{\Omega_f}=\dfrac{\mu_f}{3\rho_fS_f^2K_f^3}.\label{assumption}
\end{equation}                                                                                                                                  
Similar arguments leading to \eqref{um1} and using Young's inequality yield 
 \begin{align}
2\mu_f\|&\bfD(\bfu_m)\|^2_{\Omega_f}\nonumber
\\\leq & \dfrac{\rho_f}2\|\dot{\bfu}_m\|_{\Omega_f}^2+\dfrac{\rho_s}2\|\dot{\bfeta}_m\|^2_{\Omega_p}+\mu_s\|\bfD(\bfeta_m)\|^2_{\Omega_p}+
\dfrac{\lambda_s}2\|\nabla\cdot \bfeta_m\|^2_{\Omega_p}+\dfrac{s_0}2\|p_m\|_{\Omega_p}^2\nonumber\\
&+\dfrac{\rho_f}2\|\bfu_m\|_{\Omega_f}^2+\dfrac{\rho_s}2\|\ddot{\bfeta}_m\|_{\Omega_p}^2+\mu_s\|\bfD(\dot{\bfeta}_m)\|^2_{\Omega_p}+\dfrac{\lambda_s}2\|\nabla\cdot \dot{\bfeta}_m\|^2_{\Omega_p}+\dfrac{s_0}2\|\dot{p}_m\|^2_{\Omega_p}\nonumber\\
&+\calC_1^2(t)+\dfrac1{\rho_s} e^{T/\rho_s}\|\calC_1(t)\|_{L^2(0,T)}\label{small1}
\end{align}
 for all $t\in [0,T^{*}]$.
To bound the first five terms we differentiate {\bf (GF)} with respect to time. The specifics of this technique can be found in detail in \cite{Lions1961}.
\begin{align*}
&(\rho_f\ddot{\bfu}_m,\bfv)_{\Omega_f}+(2\mu_f\bfD(\dot{\bfu}_m),\bfD(\bfv))_{\Omega_f}+\rho_f(\dot{\bfu}_m\cdot\nabla \bfu_m,\bfv)_{\Omega_f}+\rho_f(\bfu_m\cdot\nabla \dot{\bfu}_m,\bfv)_{\Omega_f}
\\&+(\rho_s\dddot{\bfeta}_m,\bfxi)_{\Omega_p}+(2\mu_s\bfD(\dot{\bfeta}_m),\bfD(\bfxi))_{\Omega_p}+(\lambda_s\nabla\cdot \dot{\bfeta}_m,\nabla\cdot\bfxi)_{\Omega_p}-(\alpha \dot{p}_m,\nabla\cdot \bfxi)_{\Omega_p}\\&+(s_0\ddot{p}_m+\alpha\nabla\cdot \ddot{\bfeta}_m,r)_{\Omega_p}+(\bfK\nabla \dot{p}_m,\nabla r)_{\Omega_p}
+\langle \dot{p}_m\bfn_\Gamma,\bfv-\bfxi\rangle_{\Gamma_I}\\&+\Sigma_{l=1}^{d-1}\langle\beta(\dot{\bfu}_m-\ddot{\bfeta}_m)\cdot \bft_{\Gamma}^l,(\bfv-\bfxi)\cdot \bft_{\Gamma}^l\rangle_{\Gamma_I}+\langle(\ddot{\bfeta}_m-\dot{\bfu}_m)\cdot\bfn_{\Gamma},r\rangle_{\Gamma_I}\\
&\hspace{1.8cm}=-\langle \dot{P_{in}}\bfn_{\Gamma},\bfv\rangle_{\Gamma_f^{in}}+(\dot{\bff}_f,\bfv)_{\Omega_f}+(\dot{\bff}_s,\bfxi)_{\Omega_p}+(\dot{f}_p,r)_{\Omega_p}.
\end{align*}
We let $\bfv=\dot{\bfu}_m$, $r=\dot{p}_m$ and $\bfxi=\ddot{\bfeta}_m$ in the above, use the Cauch-Schwarz inequality, assumption \eqref{assumption}, inequalities \eqref{trace1}, \eqref{poincare1}, \eqref{poincare3}, \eqref{sobolev}, \eqref{korn} and assumption \eqref{KSPD} on $\bfK$ to get (neglecting the nonnegative $\beta$ term)
\begin{align*}
&\dfrac{\rho_f}2\dfrac{d}{dt}\|\dot{\bfu}_m\|^2_{\Omega_f}+2\mu_f\|\bfD(\dot{\bfu}_m)\|^2_{\Omega_f}
+\dfrac{\rho_s}2\dfrac{d}{dt}\|\ddot{\bfeta}_m\|^2_{\Omega_p}+\mu_s\dfrac{d}{dt}\|\bfD(\dot{\bfeta}_m)\|^2_{\Omega_p}\\
&+\dfrac{\lambda_s}2\dfrac{d}{dt}\|\nabla\cdot \dot{\bfeta}_m\|^2_{\Omega_p}+\dfrac{s_0}2\dfrac{d}{dt}\|\dot{p}_m\|^2_{\Omega_p}+\|\bfK^{1/2}\nabla \dot{p}_m\|^2_{\Omega_p}
\\
&\leq -\rho_f(\dot{\bfu}_m\cdot\nabla \bfu_m,\dot{\bfu}_m)_{\Omega_f}-\rho_f(\bfu_m\cdot\nabla \dot{\bfu}_m,\dot{\bfu}_m)_{\Omega_f}\\&\hspace{3cm}-\langle \dot{P_{in}}\bfn_{\Gamma},\dot{\bfu}_m\rangle_{\Gamma_f^{in}}+(\dot{\bff}_f,\dot{\bfu}_m)_{\Omega_f}+(\dot{\bff}_s,\ddot{\bfeta}_m)_{\Omega_p}+(\dot{f}_p,\dot{p}_m)_{\Omega_p}
\end{align*}
\begin{align*}
\leq &2\rho_fS_f^2K_f^3\|\bfD(\dot{\bfu}_m)\|^2_{\Omega_f}\|\bfD(\bfu_m)\|_{\Omega_f}+T_2K_f\|\dot{P}_{in}\|_{\Gamma_f^{in}}\|\bfD(\dot{\bfu}_m)\|_{\Omega_f}\\
&+P_1K_f\|\dot{\bff_f}\|_{\Omega_f}\|\bfD(\dot{\bfu}_m)\|_{\Omega_f}+\|\dot{\bff}_s\|_{\Omega_p}\|\ddot{\bfeta}_m\|_{\Omega_p}+\dfrac{P_3}{K_{\min}^{1/2}}\|\dot{f_p}\|_{\Omega_p}\|\bfK^{1/2}\nabla \dot{p}_m\|_{\Omega_p}\\
\leq &\dfrac{2\mu_f}3\|\bfD(\dot{\bfu}_m)\|^2_{\Omega_f}+\dfrac1{2\epsilon}\|\bfD(\dot{\bfu}_m)\|^2_{\Omega_f}+\dfrac{\epsilon T_2^2K_f^2}2\|\dot{P}_{in}\|_{\Gamma_f^{in}}^2+\dfrac1{2\epsilon}\|\bfD(\dot{\bfu}_m)\|^2_{\Omega_f}\\&+\dfrac{\epsilon P_1^2K_f^2}2\|\dot{\bff_f}\|^2_{\Omega_f}+\|\dot{\bff}_s\|_{\Omega_p}\|\ddot{\bfeta}_m\|_{\Omega_p}+\dfrac12\|\bfK^{1/2}\nabla \dot{p}_m\|_{\Omega_p}^2+\dfrac{P_3^2}{2K_{\min}}\|\dot{f_p}\|_{\Omega_p}^2.
\end{align*}
Picking $\epsilon=\dfrac3{\mu_f}$, we obtain
\begin{align}
&\dfrac{\rho_f}2\dfrac{d}{dt}\|\dot{\bfu}_m\|^2_{\Omega_f}+\mu_f\|\bfD(\dot{\bfu}_m)\|^2_{\Omega_f}
+\dfrac{\rho_s}2\dfrac{d}{dt}\|\ddot{\bfeta}_m\|^2_{\Omega_p}+\mu_s\dfrac{d}{dt}\|\bfD(\dot{\bfeta}_m)\|^2_{\Omega_p}\nonumber\\
&\hspace{3.8cm}+\dfrac{\lambda_s}2\dfrac{d}{dt}\|\nabla\cdot \dot{\bfeta}_m\|^2_{\Omega_p}+\dfrac{s_0}2\dfrac{d}{dt}\|\dot{p}_m\|^2_{\Omega_p}+\dfrac12\|\bfK^{1/2}\nabla \dot{p}_m\|^2_{\Omega_p}\nonumber
\\
&\leq \dfrac{3 T_2^2K_f^2}{2\mu_f}\|\dot{P}_{in}\|_{\Gamma_f^{in}}^2+\dfrac{3 P_1^2K_f^2}{2\mu_f}\|\dot{\bff_f}\|^2_{\Omega_f}+\dfrac12\|\dot{\bff}_s\|^2_{\Omega_p}+\dfrac12\|\ddot{\bfeta}_m\|^2_{\Omega_p}+\dfrac{P_3^2}{2K_{\min}}\|\dot{f_p}\|_{\Omega_p}^2\nonumber\\
&\hspace{6cm}=\calC_2^2(t)+\dfrac12\|\ddot{\bfeta}_m\|^2_{\Omega_p},\label{dotum}
\end{align}
where $\calC_2\in L^2(0,T)$ is defined in \eqref{Cs}.
Now we need a bound for $\ddot{\bfeta}_m$. Since by assumption $\ddot{\bfeta}$ is continuous, we can use Gronwall's inequality \eqref{Gronwall} again. 
Integrating from 0 to $t$ for any $t\in [0,T^{*}]$ and recalling \eqref{GalerkinIC} we obtain
\begin{align}
&\dfrac{\rho_f}2\|\dot{\bfu}_m\|^2_{\Omega_f}
+\dfrac{\rho_s}2\|\ddot{\bfeta}_m\|^2_{\Omega_p}+\mu_s\|\bfD(\dot{\bfeta}_m)\|^2_{\Omega_p}
+\dfrac{\lambda_s}2\|\nabla\cdot \dot{\bfeta}_m\|^2_{\Omega_p}+\dfrac{s_0}2\|\dot{p}_m\|^2_{\Omega_p}\nonumber\\
&\leq\dfrac{\rho_f}2\|\dot{\bfu}_m(0)\|^2_{\Omega_f}
+\dfrac{\rho_s}2\|\ddot{\bfeta}_m(0)\|^2_{\Omega_p}+\dfrac{s_0}2\|\dot{p}_m(0)\|^2_{\Omega_p}\nonumber\\
&\hspace{4cm}+\|\calC_2\|^2_{L^2(0,T)}+\dfrac1{\rho_s}\int_0^t\|\ddot{\bfeta}_m(s)\|^2_{\Omega_p}ds\label{ddoteta}.
\end{align}
Now we need bounds for $\|\dot{\bfu}_m(0)\|^2_{\Omega_f}$, $\|\ddot{\bfeta}_m(0)\|^2_{\Omega_p}$ and $\|\dot{p}_m(0)\|^2_{\Omega_p}$. For that purpose we let $\bfv=\bfzero$, $r=0$ and $\bfxi=\ddot{\bfeta}_m(0)$ in {\bf (GF)} at $t=0$. Due to \eqref{GalerkinIC} this yields:
\begin{align*}
\rho_s\|\ddot{\bfeta}_m(0)\|_{\Omega_p}^2=(\bff_s(0), \ddot{\bfeta}_m(0))_{\Omega_p}\leq \|\bff_s(0)\|_{\Omega_p}\|\ddot{\bfeta}_m(0)\|_{\Omega_p}.
\end{align*}
Therefore, 
\begin{equation}
\label{ddoteta0}
\|\ddot{\bfeta}_m(0)\|_{\Omega_p}\leq \dfrac1{\rho_s}\|\bff_s(0)\|_{\Omega_p}.
\end{equation}
Now let $\bfv=\bfeta=\bfzero$ and $r=\dot{p}_m(0)$ and $t=0$ in {(\bf GF)}. Then by \eqref{GalerkinIC} we have 
\begin{equation}
\label{dotp0}
\|\dot{p}_m(0)\|_{\Omega_p}\leq \dfrac1{s_0}\|f_p(0)\|_{\Omega_p}.
\end{equation}
Furthermore if we let $\bfeta=\bfzero$, $r=0$, $\bfv=\dot{\bfu}_m(0)$ and $t=0$ in {(\bf GF)}, we get 
\[
\rho_f\|\dot{\bfu}_m(0)\|^2_{\Omega_f}=-\langle P_{in}(0)\bfn_{\Gamma_f^{in}}, \dot{\bfu}_m(0)\rangle_{\Gamma_f^{in}}+(\bff_f(0), \dot{\bfu}_m(0))_{\Omega_f}.
\]
We need a good bound for $\langle P_{in}\bfn_{\Gamma}, \dot{\bfu}_m(0)\rangle_{\Gamma_f^{in}}$ because otherwise $\|\dot{\bfu}_m(0)\|_{\Gamma_f^{in}}$ cannot be hidden in the left hand side. 
Assuming $P_{in}\in L^2(0,T;H_{00}^{1/2}(\Gamma_f^{in}))$, its extension  $\tilde{P}_{in}$ by zero  to $\partial \Omega_f$ is in $L^2(0,T;H^{1/2}(\partial \Omega_f)).$ Then we can use the continuous lifting operator $j:L^2(0,T;H^{1/2}(\partial \Omega_f))\rightarrow L^2(0,T;H^1( \Omega_f))$ with continuity constant $C_j$. Since $\nabla\cdot\dot{\bfu}_m=0$
\begin{align*}\langle P_{in}(0)\bfn_{\Gamma_f^{in}}&, \dot{\bfu}_m(0)\rangle_{\Gamma_f^{in}}=\langle j(\tilde{P}_{in}(0))\bfn_{\partial\Omega_f}, \dot{\bfu}_m(0)\rangle_{\partial\Omega_f}=(\nabla (j(\tilde{P}_{in}(0))), \dot{\bfu}_m(0) )_{\Omega_f}\\
&\leq \|\nabla (j(\tilde{P}_{in}(0))\|_{\Omega_f}\|\dot{\bfu}_m(0) \|_{\Omega_f}\leq  \|j(\tilde{P}_{in}(0))\|_{H^1(\Omega_f)}\|\dot{\bfu}_m(0) \|_{\Omega_f}\\
&\leq C_j\|\tilde{P}_{in}(0)\|_{H^{1/2}(\partial\Omega_f)}\|\dot{\bfu}_m(0) \|_{\Omega_f}=C_j\|P_{in}(0)\|_{H_{00}^{1/2}(\Gamma_f^{in})}\|\dot{\bfu}_m(0) \|_{\Omega_f}.
\end{align*}
Then 
\begin{equation}\label{umprime0}
\|\dot{\bfu}_m(0)\|_{\Omega_f}\leq \dfrac{C_j}{\rho_f}\|P_{in}(0)\|_{H_{00}^{1/2}(\Gamma_f^{in})}+\dfrac{1}{\rho_f}\|\bff_f(0)\|_{\Omega_f}.
\end{equation}
 Plugging these in \eqref{ddoteta}, 
\begin{align*}
&\dfrac{\rho_f}2\|\dot{\bfu}_m\|^2_{\Omega_f}
+\dfrac{\rho_s}2\|\ddot{\bfeta}_m\|^2_{\Omega_p}+\mu_s\|\bfD(\dot{\bfeta}_m)\|^2_{\Omega_p}
+\dfrac{\lambda_s}2\|\nabla\cdot \dot{\bfeta}_m\|^2_{\Omega_p}+\dfrac{s_0}2\|\dot{p}_m\|^2_{\Omega_p}\nonumber
\\&\leq \dfrac{C_f^2}{\rho_f}\|P_{in}(0)\|^2_{H_{00}^{1/2}(\Gamma_f^{in})}+\dfrac1{\rho_f}\|\bff_f(0)\|_{\Omega_f}^2
+\dfrac1{2\rho_s}\|\bff_s(0)\|^2_{\Omega_p}+\dfrac1{2s_0}\|f_p(0)\|^2_{\Omega_p}\\&\hspace{4cm}+\|\calC_2\|^2_{L^2(0,T)}
+\dfrac1{\rho_s}\int_0^t\|\ddot{\bfeta}_m(s)\|^2_{\Omega_p}ds\\
&= \calC_3+\|\calC_2\|^2_{L^2(0,T)}
+\dfrac1{\rho_s}\int_0^t\|\ddot{\bfeta}_m(s)\|^2_{\Omega_p}ds
\end{align*}
where 
$\calC_3$ is defined in \eqref{Cs}.
If we neglect all terms other than the second one on the left hand side of the above equation we get
\begin{align*}
\|\ddot{\bfeta}_m\|^2_{\Omega_p}
\leq \dfrac2{\rho_s}\calC_3+\dfrac2{\rho_s}\|\calC_2\|^2_{L^2(0,T)}
+\dfrac2{\rho_s^2}\int_0^t\|\ddot{\bfeta}_m(s)\|^2_{\Omega_p}ds.
\end{align*}
 Using Gronwall's inequality \eqref{Gronwall} with
$B= \dfrac2{\rho_s}\calC_3+\dfrac2{\rho_s}\|\calC_2\|^2_{L^2(0,T)}$ , $C=\dfrac2{\rho_s^2}$ and $\zeta(t)=\|\ddot{\bfeta}_m(t)\|^2_{\Omega_p}$,
\begin{align*}
\|\ddot{\bfeta}_m(t)\|^2_{\Omega_p}\leq& e^{2T/\rho_s^2}\Big(\dfrac2{\rho_s}\|\calC_2\|^2_{L^2(0,T)}+\dfrac2{\rho_s}\calC_3\Big).
\end{align*}
Therefore, putting this in \eqref{dotum} we have
\begin{align}
&\dfrac{\rho_f}2\dfrac{d}{dt}\|\dot{\bfu}_m\|^2_{\Omega_f}+\mu_f\|\bfD(\dot{\bfu}_m)\|^2_{\Omega_f}
+\dfrac{\rho_s}2\dfrac{d}{dt}\|\ddot{\bfeta}_m\|^2_{\Omega_p}+\mu_s\dfrac{d}{dt}\|\bfD(\dot{\bfeta}_m)\|^2_{\Omega_p}\nonumber\\
&\hspace{4cm}+\dfrac{\lambda_s}2\dfrac{d}{dt}\|\nabla\cdot \dot{\bfeta}_m\|^2_{\Omega_p}
+\dfrac{s_0}2\dfrac{d}{dt}\|\dot{p}_m\|^2_{\Omega_p}+\dfrac12\|\bfK^{1/2}\nabla \dot{p}_m\|^2_{\Omega_p}
\nonumber\\
&\hspace{3cm}\leq \calC_2^2(t)+\dfrac{e^{2T/\rho_s^2}}{\rho_s}\Big(\|\calC_2\|^2_{L^2(0,T)}+\calC_3\Big).\label{ineq1}
\end{align}
Integrating from 0 to $t$ in $[0,T^{*}]$ and recalling the bounds \eqref{ddoteta0}, \eqref{dotp0} and \eqref{umprime0} we obtain
\begin{align}
\dfrac{\rho_f}2\|\dot{\bfu}_m\|^2_{\Omega_f}
&+\dfrac{\rho_s}2\|\ddot{\bfeta}_m\|^2_{\Omega_p}+\mu_s\|\bfD(\dot{\bfeta}_m)\|^2_{\Omega_p}+\dfrac{\lambda_s}2\|\nabla\cdot \dot{\bfeta}_m\|^2_{\Omega_p}
+\dfrac{s_0}2\|\dot{p}_m\|^2_{\Omega_p}\nonumber\\
&\hspace{2cm}+\int_0^t\mu_f\|\bfD(\dot{\bfu}_m)\|^2_{\Omega_f}dt+\int_0^t\dfrac12\|\bfK^{1/2}\nabla \dot{p}_m\|^2_{\Omega_p}dt\nonumber
\\
&\leq \|\calC_2\|^2_{L^2(0,T)}+\dfrac{T}{\rho_s}e^{2T/\rho_s^2}\|\calC_2\|^2_{L^2(0,T)}+\dfrac{Te^{2T/\rho_s^2}}2\calC_3\nonumber\\
&=(1+\dfrac{T}{\rho_s}e^{2T/\rho_s^2})\|\calC_2\|^2_{L^2(0,T)}+\dfrac{Te^{2T/\rho_s^2}}2\calC_3,\label{ineq:bd2}
\end{align}
which implies
\eqref{mainbound2}. Neglecting all terms on the left hand side other than the first we find

\begin{equation} \label{umprime}
\|\dot{\bfu}_m\|^2_{\Omega_f}
\leq \dfrac2{\rho_f}\left((1+\dfrac{T}{\rho_s}e^{2T/\rho_s^2})\|\calC_2\|^2_{L^2(0,T)}+\dfrac{Te^{2T/\rho_s^2}}2\calC_3\right).
\end{equation}
To find a bound for $\|\bfu_m\|_{\Omega_f}$ in \eqref{small1},
we recall \eqref{dum}, use assumption \eqref{assumption} to get \eqref{um} for all $t\in [0,T^{*}]$. Neglecting the terms other than the ones with $d/dt$ on the left hand side, we integrate \eqref{um} from 0 to $t$ where $t\in[0,T^{*}]$. Since the initial conditions are zero, we have
\begin{align}
\dfrac{\rho_f}2\|\bfu_m\|_{\Omega_f}^2
+\frac{\rho_s}2\|\dot{\bfeta}_m\|_{\Omega_p}^2+\mu_s\|\bfD(\bfeta_m)\|^2_{\Omega_p}
+\frac{\lambda_s}2\|\nabla\cdot \bfeta_m\|^2_{\Omega_p}+\frac{s_0}2\|p_m\|_{\Omega_p}^2\nonumber\\
\leq (1+\dfrac{T}{\rho_s}e^{T/\rho_s})\|\calC_1\|^2_{L^2(0,T)}.\label{ineq:bd1}
\end{align}
Therefore using \eqref{umprime} and \eqref{ineq:bd1} in \eqref{small1} we get
\begin{multline*}\|\bfD(\bfu_m)\|_{\Omega_f}^2\leq \dfrac1{2\mu_f}\Big(\calC_1^2(t)
+\big(1+\dfrac1{\rho_s} e^{T/\rho_s}+\dfrac{T}{\rho_s}e^{T/\rho_s}\big)\|\calC_1\|^2_{L^2(0,T)}\\
+\big(1+\dfrac{T}{\rho_s}e^{2T/\rho_s^2}\big)\|\calC_2\|^2_{L^2(0,T)}+\dfrac{Te^{2T/\rho_s^2}}2\calC_3\Big)
\end{multline*}
for all $t\in [0,T^{*}]$.
So if the data satisfy
\begin{multline*}\dfrac1{2\mu_f}\Big(\|\calC_1\|^2_{L^{\infty}(0,T)}
+\big(1+\dfrac1{\rho_s} e^{T/\rho_s}+\dfrac{T}{\rho_s}e^{T/\rho_s}\big)\|\calC_1\|^2_{L^2(0,T)}\\
+\big(1+\dfrac{T}{\rho_s}e^{2T/\rho_s^2}\big)\|\calC_2\|^2_{L^2(0,T)}+\dfrac{Te^{2T/\rho_s^2}}2\calC_3\Big)<\dfrac{\mu_f^3}{9\rho_f^2S_f^4K_f^6},
\end{multline*} 
then we have
\[
\|\bfD(\bfu_m)\|_{\Omega_f}<\dfrac{\mu_f^2}{3\rho_fS_f^2K_f^3}
\]
for all $t\in [0,T^{*}]$
which contradicts with the assumption.
\end{proof}
\begin{remark}
Note that if $P_{in}(0)=0$,  we immediately get $\|\dot{\bfu}_m(0)\|_{\Omega_f}\leq \dfrac1{\rho_f}\|\bff_f(0)\|_{\Omega_f}$ rather than \eqref{umprime0}. In that case, it is not necessary to assume that $P_{in}\in L^2(0,T;H_{00}^{1/2}(\Gamma_f^{in}))$. 
\end{remark}
\subsection{Passing to the limit.}
Since we established the boundedness of the Galerkin solution $(\bfu_m, \bfeta_m, p_m)$ that we stated in Remark \ref{galerkinbound}, we can pass to the limit in {(\bf GF)} to obtain a solution to {(\bf WF2)}.
Reflexivity of $\bfV_f$, $\bfX_p$ and $Q_p$ and therefore of $H^1(0,T;\bfV_f)$, $H^1(0,T;\bfX_p)$ and $H^1(0,T;Q_p)$ imply that there exists a subsequence of the Galerkin solution, still denoted by $\{(\bfu_m, \bfeta_m, p_m)\}_m$
such that
\begin{alignat*}{3}
\bfu_m&\rightharpoonup &\,\, \bfu_f\quad &\mbox{ in }H^1(0,T;\bfH^1(\Omega_f)), 
\\
\bfeta_m&\rightharpoonup&\bfeta\quad &\mbox{ in }H^1(0,T;\bfH^1(\Omega_p)),
\\
\ddot{\bfeta}_m&\rightharpoonup&\ddot{\bfeta}\quad &\mbox{  in } L^2(0,T;\bfL^2(\Omega_p)), 
\\
p_m&\rightharpoonup&p_p\quad& \mbox{  in } H^1(0,T;H^1(\Omega_p))
\end{alignat*}
where $\rightharpoonup$ stands for weak convergence.
Note that due to the continuity of the trace operator we also have
\begin{alignat*}{3}
\bfu_m&\rightharpoonup &\,\, \bfu_f\quad &\mbox{ in }L^2(0,T;H^{1/2}(\partial\Omega_f)), 
\\
\dot{\bfeta}_m&\rightharpoonup&\dot{\bfeta}\quad &\mbox{ in }L^2(0,T;H^{1/2}(\partial\Omega_p)),\\
p_m&\rightharpoonup&p_p\quad& \mbox{ in } L^2(0,T;H^{1/2}(\partial\Omega_p)).
\end{alignat*}
We can pass to the limit in the linear terms with the convergence results above. However, the nonlinear term needs a stronger convergence result. To obtain such a result we recall that $\bfH^1(\Omega_f)\subset \bfL^4(\Omega_f)\subset \bfL^2(\Omega_f)$ and that $\bfH^1(\Omega_f)$ is compactly embedded in $\bfL^4(\Omega_f)$ due to the Sobolev embedding theorem. Furthermore, Remark \ref{galerkinbound} implies that the subsequence $\{\bfu_m\}_m$ is bounded in $L^4(0,T;\bfH^1(\Omega_f))$ and $\{\frac{\partial \bfu_m}{\partial t}\}_m$ is 
bounded in $L^1(0,T;\bfL^2(\Omega_f))$. Therefore, Theorem \ref{Simon} implies that $\{\bfu_m\}_m$ has a subsequence, still denoted the same, such that 
\[
\bfu_m\rightarrow \bfu_f \mbox{ strongly in }L^4(0,T;\bfL^4(\Omega_f)).
\] 
Fix an integer $k\geq 1$. We multiply {(\bf GF)} by $\psi(t)\in L^2(0,T)$ 
 and integrate in time to obtain
\begin{align}
\int_0^T(\rho_f\bfudot_m&,\psi(t)\bfv)_{\Omega_f}dt+\int_0^T(2\mu_f\bfD(\bfu_m),\psi(t)\bfD(\bfv))_{\Omega_f}dt\nonumber\\
&+\int_0^T\rho_f(\bfu_m\cdot\nabla \bfu_m,\psi(t)\bfv)_{\Omega_f}dt
+\int_0^T(\rho_s\ddot{\bfeta}_m,\psi(t)\bfxi)_{\Omega_p}dt\nonumber\\
&+\int_0^T(2\mu_s\bfD(\bfeta_m),\psi(t)\bfD(\bfxi))_{\Omega_p}dt
+\int_0^T(\lambda_s\nabla\cdot \bfeta_m,\psi(t)\nabla\cdot\bfxi)_{\Omega_p}dt\nonumber\\
&-\int_0^T(\alpha p_m,\psi(t)\nabla\cdot \bfxi)_{\Omega_p}dt+\int_0^T(s_0\dot{p}_m+\alpha\nabla\cdot \bfetadot_m,\psi(t)r)_{\Omega_p}dt\nonumber\\
&+\int_0^T(\bfK\nabla p_m,\psi(t)\nabla r)_{\Omega_p}dt\nonumber
+\int_0^T\langle p_m\bfn_\Gamma,\psi(t)(\bfv-\bfxi)\rangle_{\Gamma_I}dt\\
&+\Sigma_{l=1}^{d-1}\int_0^T\langle\beta(\bfu_m-\bfetadot_m)\cdot \bft_{\Gamma}^l,\psi(t)(\bfv-\bfxi)\cdot \bft_{\Gamma}^l\rangle_{\Gamma_I}dt\nonumber\\
&+\int_0^T\langle(\bfetadot_m-\bfu_m)\cdot\bfn_{\Gamma},\psi(t)r\rangle_{\Gamma_I}dt\label{init1}=-\int_0^T\langle P_{in}(t)\bfn_{\Gamma},\psi(t)\bfv\rangle_{\Gamma_f^{in}}dt\\
&+\int_0^T(\bff_f,\psi(t)\bfv)_{\Omega_f}dt+\int_0^T(\bff_s,\psi(t)\bfxi)_{\Omega_p}dt+\int_0^T(f_p,\psi(t)r)_{\Omega_p}dt,\nonumber
\end{align}
for all $(\bfv, \bfxi, r)\in \bfV_f^k\times \bfX_p^k\times Q_p^k$, $m\geq k$. Letting $m\rightarrow \infty$ we obtain
\begin{align}
\int_0^T(\rho_f\bfudot_f,&\psi(t)\bfv)_{\Omega_f}dt+\int_0^T(2\mu_f\bfD(\bfu_f),\psi(t)\bfD(\bfv))_{\Omega_f}dt\nonumber\\
&+\int_0^T\rho_f(\bfu_f\cdot\nabla \bfu_f,\psi(t)\bfv)_{\Omega_f}dt\nonumber
+\int_0^T(\rho_s\ddot{\bfeta},\psi(t)\bfxi)_{\Omega_p}dt\\
&+\int_0^T(2\mu_s\bfD(\bfeta),\psi(t)\bfD(\bfxi))_{\Omega_p}dt
+\int_0^T(\lambda_s\nabla\cdot \bfeta,\psi(t)\nabla\cdot\bfxi)_{\Omega_p}dt\nonumber\\
&-\int_0^T(\alpha p_p,\psi(t)\nabla\cdot \bfxi)_{\Omega_p}dt+\int_0^T(s_0\dot{p}_p+\alpha\nabla\cdot \bfetadot,\psi(t)r)_{\Omega_p}dt\nonumber\\
&+\int_0^T(\bfK\nabla p_p,\psi(t)\nabla r)_{\Omega_p}dt
+\int_0^T\langle p_p\bfn_\Gamma,\psi(t)(\bfv-\bfxi)\rangle_{\Gamma_I}dt\nonumber\\
&+\Sigma_{l=1}^{d-1}\int_0^T\langle\beta(\bfu_f-\bfetadot)\cdot \bft_{\Gamma}^l,\psi(t)(\bfv-\bfxi)\cdot \bft_{\Gamma}^l\rangle_{\Gamma_I}dt\label{init2}\\
&+\int_0^T\langle(\bfetadot-\bfu_f)\cdot\bfn_{\Gamma},\psi(t)r\rangle_{\Gamma_I}dt=-\int_0^T\langle P_{in}(t)\bfn_{\Gamma},\psi(t)\bfv\rangle_{\Gamma_f^{in}}dt\nonumber\\
&+\int_0^T(\bff_f,\psi(t)\bfv)_{\Omega_f}dt+\int_0^T(\bff_s,\psi(t)\bfxi)_{\Omega_p}dt+\int_0^T(f_p,\psi(t)r)_{\Omega_p}dt,\nonumber
\end{align}
for all $(\bfv, \bfxi, r)\in \bfV_f^k\times \bfX_p^k\times Q_p^k$. Since any element of $\bfV, \bfX_p, Q_p$ can be approximated by elements of $\bfV_f^k, \bfX_p^k, Q_p^k$ and $\psi\in L^2(0,T)$ is arbitrary, this also holds for all $(\bfv, \bfxi, r)\in \bfV_f\times \bfX_p\times Q_p$ a.e. in $(0,T)$. Therefore we recover the equation in {(\bf WF2)}.

Last, we check 
whether the initial conditions are satisfied. 
In \eqref{init2} we let $\psi\in \calC^2([0,T])$ such that $\psi(T)=0$ and $\dot{\psi}(T)=0$ and integrate the first and the eighth terms once and the fourth term twice with respect to time. Then we do the same in \eqref{init1} and also take the limit as $m\rightarrow \infty$ using the convergence properties established in the beginning of the proof and using \eqref{GalerkinIC}.  
Comparing the resulting equations yield:
\begin{multline*}
-(\rho_f\bfu(0),\psi(0)\bfv)_{\Omega_f}-(\rho_s\dot{\bfeta}(0),\psi(0)\bfxi)_{\Omega_p}+(\rho_s\bfeta(0),\dot{\psi}(0)\bfxi)_{\Omega_p}\\
-(s_0 p(0),\psi(0)r)_{\Omega_p}=0,
\end{multline*}
for all $\bfv\in \bfV, \bfxi\in \bfX_p, r\in Q_p$. Since $\psi(0)$ and $\dot{\psi}(0)$ are arbitrary, 
\begin{align*}
-(\rho_f\bfu(0),\bfv)_{\Omega_f}-(\rho_s\dot{\bfeta}(0),\bfxi)_{\Omega_p}+(\rho_s\bfeta(0),\bfxi)_{\Omega_p}
-(s_0 p(0),r)_{\Omega_p}=0,
\end{align*}
for all $\bfv\in \bfV, \bfxi\in \bfX_p, r\in Q_p$.
This yields the initial conditions stated in {(\bf WF2)}.
Finally passing to the limit in \eqref{mainbound1}, \eqref{Dumbound} and \eqref{mainbound2}, we obtain \eqref{ineq:mainresult1}, \eqref{ineq:mainresult2} and \eqref{ineq:mainresult3}.
\subsection{Uniqueness.}
For the Stokes flow, there is no issue of uniqueness. For the Navier-Stokes problem, we can only prove  uniqueness for restricted solution.
\begin{theorem}[Local uniqueness]
Let $(\bfu_f, \bfeta, p_p)$ be a solution of {(\bf WF2)}.  Then if 
\[\|\bfD(\bfu)\|_{\Omega_f}\leq \dfrac{\mu}{S_f^2K_f^3},\]
then $(\bfu_f, \bfeta, p_p)$ is unique.
\end{theorem}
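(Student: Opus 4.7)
The plan is to prove uniqueness by an energy argument on the difference of two solutions. Let $(\bfu_f^{(i)},\bfeta^{(i)},p_p^{(i)})$, $i=1,2$, be two solutions of {\bf (WF2)} with the stated smallness property, and set $\bfw=\bfu_f^{(1)}-\bfu_f^{(2)}$, $\bfzeta=\bfeta^{(1)}-\bfeta^{(2)}$ and $q=p_p^{(1)}-p_p^{(2)}$. Since {\bf (WF2)} is linear in every term except the convective one, subtracting the equations satisfied by the two solutions yields the same weak formulation, but with the nonlinear term replaced by $\rho_f(\bfu_f^{(1)}\cdot\nabla\bfu_f^{(1)}-\bfu_f^{(2)}\cdot\nabla\bfu_f^{(2)},\bfv)_{\Omega_f}$, and with vanishing right-hand side and vanishing initial data. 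I would use the standard identity
\[
\bfu_f^{(1)}\cdot\nabla\bfu_f^{(1)}-\bfu_f^{(2)}\cdot\nabla\bfu_f^{(2)} = \bfw\cdot\nabla\bfu_f^{(1)}+\bfu_f^{(2)}\cdot\nabla\bfw.
\]

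Next, I would test the difference equation with $(\bfv,\bfxi,r)=(\bfw,\dot{\bfzeta},q)$. Just as in the a priori estimate leading to \eqref{energy1}, the coupling integrals on $\Gamma_I$ telescope: the contributions $\langle q\bfn_\Gamma,\bfw-\dot{\bfzeta}\rangle_{\Gamma_I}$ and $\langle(\dot{\bfzeta}-\bfw)\cdot\bfn_\Gamma,q\rangle_{\Gamma_I}$ cancel, the pressure/divergence coupling $-(\alpha q,\nabla\cdot\dot{\bfzeta})_{\Omega_p}+(\alpha\nabla\cdot\dot{\bfzeta},q)_{\Omega_p}$ cancels, and the $\beta$-term becomes the nonnegative $\sum_l\beta\|(\bfw-\dot{\bfzeta})\cdot\bft_\Gamma^l\|_{\Gamma_I}^2$, which stays on the left-hand side. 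The remaining linear terms assemble exactly as in Section~\ref{sec:apriori} into
\[
\tfrac12\tfrac{d}{dt}\mathcal{E}(t) + 2\mu_f\|\bfD(\bfw)\|_{\Omega_f}^2 + \|\bfK^{1/2}\nabla q\|_{\Omega_p}^2 \le -\rho_f(\bfw\cdot\nabla\bfu_f^{(1)},\bfw)_{\Omega_f},
\]
where $\mathcal{E}(t)=\rho_f\|\bfw\|_{\Omega_f}^2+\rho_s\|\dot{\bfzeta}\|_{\Omega_p}^2+2\mu_s\|\bfD(\bfzeta)\|_{\Omega_p}^2+\lambda_s\|\nabla\cdot\bfzeta\|_{\Omega_p}^2+s_0\|q\|_{\Omega_p}^2$; here I used that $(\bfu_f^{(2)}\cdot\nabla\bfw,\bfw)_{\Omega_f}=0$ because $\bfu_f^{(2)}\in L^2(0,T;\bfV_f)$ (divergence-free and tangential-type behavior on $\partial\Omega_f\setminus\Gamma_I$; the interface flux cancels with the interface terms above or can be absorbed into the coupling structure).

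For the remaining nonlinear term I would use H\"older, the Sobolev bound \eqref{sobolev} and Korn's inequality \eqref{korn} to estimate
\[
\rho_f|(\bfw\cdot\nabla\bfu_f^{(1)},\bfw)_{\Omega_f}|\le \rho_f\|\bfw\|_{\bfL^4(\Omega_f)}^2|\bfu_f^{(1)}|_{1,\Omega_f}\le \rho_f S_f^2 K_f^3\|\bfD(\bfu_f^{(1)})\|_{\Omega_f}\|\bfD(\bfw)\|_{\Omega_f}^2.
\]
Invoking the hypothesis $\|\bfD(\bfu_f^{(1)})\|_{\Omega_f}\le \mu_f/(\rho_f S_f^2K_f^3)$, the right-hand side is bounded by $\mu_f\|\bfD(\bfw)\|_{\Omega_f}^2$, which can be absorbed into the viscous dissipation $2\mu_f\|\bfD(\bfw)\|_{\Omega_f}^2$ on the left, leaving a nonnegative dissipation term. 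One then arrives at
\[
\tfrac{d}{dt}\mathcal{E}(t) \le 0
\]
(or, more carefully, $\tfrac{d}{dt}\mathcal{E}(t)\le C\,\mathcal{E}(t)$ if any lower-order traces on $\Gamma_I$ need to be controlled). Since $\mathcal{E}(0)=0$, Gronwall's inequality gives $\mathcal{E}\equiv 0$, hence $\bfw=0$, $\dot{\bfzeta}=0$ and $q=0$; combined with $\bfzeta(0)=0$ this yields $\bfzeta=0$, proving uniqueness.

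The delicate point is the absorption of the convective term: the Sobolev/Korn chain must be applied sharply so that the smallness constant cancels exactly against $2\mu_f$ in the viscous term, and one must check that the $\bfu_f^{(2)}\cdot\nabla\bfw$ integral truly vanishes on the test, taking into account that $\bfw$ need not vanish on $\Gamma_I$ (I expect this boundary contribution to combine cleanly with the interface coupling terms already shown to cancel, but this is the step that requires the most care).
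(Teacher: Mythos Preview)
Your overall structure is correct and matches the paper's argument: subtract the two equations, test with $(\bfw,\dot{\bfzeta},q)$, observe that all coupling terms cancel or give nonnegative contributions, and absorb the convective remainder using Sobolev/Korn and the smallness hypothesis.

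The genuine gap is your treatment of $(\bfu_f^{(2)}\cdot\nabla\bfw,\bfw)_{\Omega_f}$. This integral does \emph{not} vanish here. Integration by parts with $\nabla\cdot\bfu_f^{(2)}=0$ gives
\[
(\bfu_f^{(2)}\cdot\nabla\bfw,\bfw)_{\Omega_f}=\tfrac12\int_{\partial\Omega_f}(\bfu_f^{(2)}\cdot\bfn_f)\,|\bfw|^2,
\]
and only the portion on $\Gamma_f^{ext}$ vanishes (where $\bfw=\bfzero$). On $\Gamma_f^{in}$ and $\Gamma_f^{out}$ the boundary conditions \eqref{bc_f_in}--\eqref{bc_f_out} are stress (Neumann) conditions, so neither $\bfu_f^{(2)}\cdot\bfn_f$ nor $\bfw$ vanishes there, and on $\Gamma_I$ neither vanishes either. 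Moreover, this boundary flux has no counterpart among the interface coupling terms in {\bf (WF2)}; those involve $q\bfn_\Gamma$, the $\beta$ tangential slip, and $(\dot{\bfzeta}-\bfw)\cdot\bfn_\Gamma$, none of which is of the form $(\bfu_f^{(2)}\cdot\bfn_\Gamma)|\bfw|^2$. So the cancellation you are hoping for does not occur, and the ``absorbed into the coupling structure'' fallback does not work.

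The paper does not attempt the skew-symmetry trick at all. It simply bounds \emph{both} pieces of the decomposition by the same H\"older/Sobolev/Korn chain,
\[
\rho_f|(\bfw\cdot\nabla\bfu_f^{(1)}+\bfu_f^{(2)}\cdot\nabla\bfw,\bfw)_{\Omega_f}|
\le \rho_f S_f^2K_f^3\big(\|\bfD(\bfu_f^{(1)})\|_{\Omega_f}+\|\bfD(\bfu_f^{(2)})\|_{\Omega_f}\big)\|\bfD(\bfw)\|_{\Omega_f}^2,
\]
and then invokes the smallness assumption on \emph{both} solutions, so the right-hand side is at most $2\mu_f\|\bfD(\bfw)\|_{\Omega_f}^2$, which is absorbed exactly by the viscous term. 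With that single correction your argument coincides with the paper's proof.
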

\begin{proof}
Let $(\bfu_1, \bfeta_1, p_1)$ and $(\bfu_2, \bfeta_2, p_2)$ be two solutions of {(\bf WF2)}. Then $\bfw=\bfu_1-\bfu_2$, $\bftheta=\bfeta_1-\bfeta_2$ and $\phi=p_1-p_2$ satisfy:
\begin{align*}
&(\rho_f\dot{\bfw},\bfv)_{\Omega_f}+(2\mu_f\bfD(\bfw),\bfD(\bfv))_{\Omega_f}+\rho_f(\bfu_1\cdot\nabla \bfu_1-\bfu_2\cdot\nabla \bfu_2,\bfv)_{\Omega_f}-(\phi,\nabla\cdot \bfv)_{\Omega_f}\\
&
+(\rho_s\ddot{\bftheta},\bfxi)_{\Omega_p}+(2\mu_s\bfD(\bftheta),\bfD(\bfxi))_{\Omega_p}+(\lambda_s\nabla\cdot \bftheta,\nabla\cdot\bfxi)_{\Omega_p}-(\alpha \phi,\nabla\cdot \bfxi)_{\Omega_p}\\
&+(s_0\dot{\phi}+\alpha\nabla\cdot \dot{\bftheta},r)_{\Omega_p}+(\bfK\nabla \phi,\nabla r)_{\Omega_p}
\\
&+\langle \phi\bfn_\Gamma,\bfv-\bfxi\rangle_{\Gamma_I}+\Sigma_{l=1}^{d-1}\langle\beta(\bfw-\dot{\bftheta})\cdot \bft_{\Gamma}^l,(\bfv-\bfxi)\cdot \bft_{\Gamma}^l\rangle_{\Gamma_I}+\langle(\dot{\bftheta}-\bfw)\cdot\bfn_{\Gamma},r\rangle_{\Gamma_I}\\
&\hspace{3cm}=0
\end{align*}
for all $\bfv, \bfxi, r$. Let $\bfv=\bfw, \bfxi=\dot{\bftheta}, r=\phi$ where $\bfw(0)=\bfzero, \bftheta(0)=\bfzero, \dot{\bftheta}(0)=\bfzero, \phi(0)=0$.
\begin{align*}
&\dfrac{\rho_f}2\dfrac{d}{dt}\|\bfw\|^2_{\Omega_f}+2\mu_f\|\bfD(\bfw)\|^2_{\Omega_f}
+\dfrac{\rho_s}2\dfrac{d}{dt}\|\dot{\bftheta}\|^2_{\Omega_p}+\mu_s\dfrac{d}{dt}\|\bfD(\bftheta)\|^2_{\Omega_p}\\
&+\dfrac{\lambda_s}2\dfrac{d}{dt}\|\nabla\cdot \bftheta\|^2_{\Omega_p}+\dfrac{s_0}2\dfrac{d}{dt}\|\phi\|^2_{\Omega_p}+\|\bfK^{1/2}\nabla \phi\|^2_{\Omega_p}
+\Sigma_{l=1}^{d-1}\beta\|\bfw-\dot{\bftheta})\cdot \bft_{\Gamma}^l\|^2_{\Gamma_I}\\
&=-\rho_f(\bfw\cdot\nabla \bfu_1+\bfu_2\cdot\nabla \bfw,\bfw)_{\Omega_f}\\
&\leq S_f^2K_f^3\|\bfD(\bfw)\|^2_{\Omega_f}\|\bfD(\bfu_1)\|_{\Omega_f}+S_f^2K_f^3\|\bfD(\bfu_2)\|_{\Omega_f}\|\bfD(\bfw)\|_{\Omega_f}^2
\leq 2\mu_f\|\bfD(\bfw)\|^2_{\Omega_f}.
\end{align*}
Therefore, 
\[
\dfrac{\rho_f}2\dfrac{d}{dt}\|\bfw\|^2_{\Omega_f}
+\dfrac{\rho_s}2\dfrac{d}{dt}\|\dot{\bftheta}\|^2_{\Omega_p}+\mu_s\dfrac{d}{dt}\|\bfD(\bftheta)\|^2_{\Omega_p}\\
+\dfrac{\lambda_s}2\dfrac{d}{dt}\|\nabla\cdot \bftheta\|^2_{\Omega_p}+\dfrac{s_0}2\dfrac{d}{dt}\|\phi\|^2_{\Omega_p}\leq 0
\]
which implies using the initial conditions:
\[
\dfrac{\rho_f}2\|\bfw(t)\|^2_{\Omega_f}
+\dfrac{\rho_s}2\|\dot{\bftheta}(t)\|^2_{\Omega_p}+\mu_s\|\bfD(\bftheta(t))\|^2_{\Omega_p}
+\dfrac{\lambda_s}2\|\nabla\cdot \bftheta\|^2_{\Omega_p}+\dfrac{s_0}2\|\phi\|^2_{\Omega_p}\leq 0
\]
This implies
\[
\bfw=\bfzero, \bftheta=\bfzero, \phi=0.
\]
\end{proof}
\subsection{Existence of a Navier-Stokes pressure.}\label{sec:NSEpressure}
Next we prove that from any solution of {\bf (WF2)} we can recover a solution of {\bf{(WF1)}}. In fact, an inf-sup condition is sufficient to show the existence of a $p_f$ which was eliminated when we restricted the search space $L^2(0,T;\bfX_f)$ to $L^2(0,T;\bfV_f)$.
The following inf-sup condition holds \cite{GR2009, CGR2013}: There exists a constant $\kappa>0$ such that 
\[
\inf_{q\in Q_f}\sup_{\bfv\in \bfX_f}\dfrac{(\nabla\cdot\bfv,q)_{\Omega_f}}{\|\bfv\|_{H^1(\Omega_f)}\|q\|_{L^2(\Omega_f)}}\geq \kappa.
\]
Note that we can replace the supremum above with the supremum over all $(\bfv,\bfxi,r)\in \bfX_f\times \bfX_p\times Q_p$. In fact, the supremum is attained when $\bfxi=\bfzero$ and $r=0$.
\begin{lemma}
If $(\bfu_f, \bfeta, p_p)$ is a solution of problem {\bf (WF2)}, then there exists a unique $p_f\in L^{\infty}(0,T;Q_f)$ such that $(\bfu_f,p_f,\bfeta, p_p)$ is a solution of problem \bf{(WF1)}.
\end{lemma}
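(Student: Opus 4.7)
The plan is to use the inf-sup condition stated just before the lemma to recover $p_f$ as the Lagrange multiplier associated with the incompressibility constraint, first pointwise (a.e.) in $t$, and then to promote this to $L^{\infty}(0,T;Q_f)$ regularity using the a priori bounds from Theorem \ref{mainresult}.

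For a.e.\ $t\in(0,T)$ I would define the linear form $L_t:\bfX_f\to\mathbb{R}$ by collecting on the right side all terms appearing in the (WF1) momentum equation for the fluid except $-(p_f,\nabla\cdot\bfv)_{\Omega_f}$, evaluated on the known quantities $\bfu_f(t),\dot{\bfu}_f(t),\bfeta(t),\dot{\bfeta}(t),p_p(t),\bff_f(t),P_{in}(t)$ (with the $\bfxi$- and $r$-terms absent). Choosing $\bfxi=\bfzero$ and $r=0$ in (WF2) shows that $L_t$ vanishes on $\bfV_f$. By the inf-sup condition, the divergence operator is an isomorphism from $\bfX_f/\bfV_f$ onto $Q_f$, so there exists a unique $p_f(t)\in Q_f$ with
$$
(p_f(t),\nabla\cdot\bfv)_{\Omega_f}=L_t(\bfv)\qquad\forall\,\bfv\in\bfX_f,
$$
satisfying $\|p_f(t)\|_{\Omega_f}\le\kappa^{-1}\|L_t\|_{\bfX_f'}$.

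Next I would bound $\|L_t\|_{\bfX_f'}$ term by term using Cauchy--Schwarz, Korn's inequality \eqref{korn}, the trace inequalities \eqref{trace1}--\eqref{trace3}, and the Sobolev embedding \eqref{sobolev}. For the convective contribution the estimate
$|(\bfu_f\cdot\nabla\bfu_f,\bfv)_{\Omega_f}|\le \|\bfu_f\|_{\bfL^4(\Omega_f)}\|\nabla\bfu_f\|_{\Lbb^2(\Omega_f)}\|\bfv\|_{\bfL^4(\Omega_f)}\le S_f^2|\bfu_f|_{1,\Omega_f}^2|\bfv|_{1,\Omega_f}$
combined with the $L^{\infty}$-in-time velocity bound \eqref{ineq:mainresult2} provides the required control. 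Putting the pieces together reproduces the estimate \eqref{pfbound} and thus gives $p_f\in L^{\infty}(0,T;Q_f)$; measurability of $t\mapsto p_f(t)$ follows since $p_f$ is obtained by continuous linear operations on measurable data.

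To conclude the equivalence with (WF1), adding $(p_f(t),\nabla\cdot\bfv)_{\Omega_f}$ back recovers the full variational equation for all test functions $(\bfv,\bfxi,r)\in\bfX_f\times\bfX_p\times Q_p$; the incompressibility equation $(\nabla\cdot\bfu_f,q)_{\Omega_f}=0$ is automatic because $\bfu_f(t)\in\bfV_f$, and the initial conditions are inherited from (WF2). Uniqueness of $p_f(t)$ at each time follows from the inf-sup condition. The main subtlety is the time regularity: obtaining $p_f$ in $L^{\infty}(0,T;Q_f)$ (rather than in a weaker Bochner space in time) requires the $L^{\infty}(0,T;\bfH^1(\Omega_f))$ bound on $\bfu_f$ from \eqref{ineq:mainresult2} to handle the nonlinear term and the $L^{\infty}(0,T;\bfL^2(\Omega_f))$ bound on $\dot{\bfu}_f$ from \eqref{ineq:mainresult3} to handle the inertial term; without these the pressure would only be recovered in a weaker norm.
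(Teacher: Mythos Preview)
Your proposal is correct and follows essentially the same approach as the paper: define, for a.e.\ $t$, a continuous linear functional collecting all terms of (WF1) except the pressure term, observe via (WF2) that it annihilates divergence-free test functions, invoke the inf-sup condition (Babu\v{s}ka--Brezzi) to produce a unique $p_f(t)\in Q_f$, and then bound $\|p_f(t)\|_{\Omega_f}$ term by term to obtain \eqref{pfbound}. The only cosmetic difference is that the paper defines the functional $\calF$ on the full product $\bfX_f\times\bfX_p\times Q_p$ (noting the supremum in the inf-sup is attained at $\bfxi=\bfzero$, $r=0$), whereas you work directly on $\bfX_f$ and then recover the full (WF1) equation by linearity in the test functions, using (WF2) with $\bfv=\bfzero\in\bfV_f$ for the $\bfxi$- and $r$-parts; your added remarks on measurability and on why the $L^\infty$-in-time bounds \eqref{ineq:mainresult2}--\eqref{ineq:mainresult3} are needed for the $L^\infty(0,T;Q_f)$ regularity are points the paper leaves implicit.
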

\begin{proof}
Let's define a mapping $\calF$ such that for all $(\bfv,\bfxi,r)\in \bfX_f\times \bfX_p\times Q_p$ 
\begin{align*}
&\calF(\bfv, \bfxi, r)=(\rho_f\dot{\bfu}_f,\bfv)_{\Omega_f}+(2\mu_f\bfD(\bfu_f),\bfD(\bfv))_{\Omega_f}+\rho_f(\bfu_f\cdot\nabla \bfu_f,\bfv)_{\Omega_f}
\\&+(\rho_s\ddot{\bfeta},\bfxi)_{\Omega_p}+(2\mu_s\bfD(\bfeta),\bfD(\bfxi))_{\Omega_p}+(\lambda_s\nabla\cdot \bfeta,\nabla\cdot\bfxi)_{\Omega_p}-(\alpha p_p(t),\nabla\cdot \bfxi)_{\Omega_p}\\
&+(s_0\dot{p_p}+\alpha\nabla\cdot \dot{\bfeta},r)_{\Omega_p}+(\bfK\nabla p_p,\nabla r)_{\Omega_p}
\\
&+\langle p_p\bfn_\Gamma,\bfv-\bfxi\rangle_{\Gamma_I}+\Sigma_{l=1}^{d-1}\langle\beta(\bfu_f-\bfetadot)\cdot \bft_{\Gamma}^l,(\bfv-\bfxi)\cdot \bft_{\Gamma}^l\rangle_{\Gamma_I}+\langle(\bfetadot-\bfu_f)\cdot\bfn_{\Gamma},r\rangle_{\Gamma_I}\\
&+\langle P_{in}(t)\bfn_f,\bfv\rangle_{\Gamma_f^{in}}-(\bff_f,\bfv)_{\Omega_f}-(\bff_s,\bfxi)_{\Omega_p}-(f_p,r)_{\Omega_p}, \mbox{ a.e. in }(0,T).
\end{align*}
It is straightforward to see that $\calF$ is linear and continuous on $\bfX_f\times \bfX_p\times Q_p$ for a.e. $t\in (0,T)$. Furthermore, $\calF(\bfv,\bfxi,r)=0$ for any $(\bfv,\bfxi,r)\in \bfV_f\times \bfX_f\times Q_p$ for a.e. $t\in (0,T)$. Therefore, the theory of Babuska-Brezzi imply for a.e. $t\in (0,T)$ that there exists a unique function $p_f\in Q_f$ such that 
\begin{equation}\label{NSEpressure}
(p_f,\nabla\cdot \bfv)_{\Omega_f}=\calF(\bfv, \bfxi, r), \forall (\bfv,\bfxi,r)\in \bfX_f\times \bfX_f\times Q_p, 
\end{equation}
that is, there is a unique $p_f\in L^{\infty}(0,T;Q_f)$ such that $(\bfu_f,p_f,\bfeta, p_p)$ is a solution of problem {\bf (WF1)}. Furthermore, letting $r=0$, $\bfxi=\bfzero$ in \eqref{NSEpressure} and using the inf-sup condition again 
we have 
\begin{align*}
\|p_f\|_{\Omega_f}\leq \dfrac1{\kappa}\Big(\rho_f\|\dot{\bfu}_f\|_{\Omega_f}+2\mu_f\|\bfD(\bfu_f)\|_{\Omega_f}+S_f^2\|\bfu_f\|_{1, \Omega_f}^2+T_1T_3\|p_p\|_{1,\Omega_p}\\+\beta T_1^2\|\bfu_f\|_{1,\Omega_f}+\beta T_1T_5\|\dot{\bfeta}\|_{1,\Omega_p}+T_2\|P_{in}\|_{\Gamma_f^{in}}+\|\bff_f\|_{\Omega_f}\Big),
\end{align*}
a.e. in $(0,T)$ which implies the bound \eqref{pfbound} on $p_f$ in Theorem \ref{mainresult}.
\end{proof}
This concludes the proof of Theorem \ref{mainresult}.
\appendix 
\section{The interface conditions.} \label{justification}
In this section we prove that the interface conditions are meaningful for a solution of \eqref{moment}, \eqref{mass}, \eqref{structure} and \eqref{mass1} 
and therefore justify the derivation of the weak formulation  in the proof of Proposition \ref{equivalence}.

Let us consider a solution of \eqref{moment}, \eqref{mass}, \eqref{structure} and \eqref{mass1} such that $\bfu_f\in L^2(0,T;\bfX_f)$, $p_f\in L^1(0,T;Q_f)$, $\bfeta\in H^1(0,T;\bfX_p)$ and  $p_p \in L^2(0,T;Q_p)$ with $\bfudot_f\in L^1(0,T;\bfL^{\frac32}(\Omega_f))$. 
\subsubsection*{Interface conditions \eqref{inter3} and \eqref{inter4}.}
Since $\bfu_f\in L^2(0,T;\bfX_f)$ and $\bfH^1(\Omega_f)$ is  embedded continuously in $\bfL^6(\Omega_f)$, for $d=2,3$, $\bfu_f\in L^2(0,T;\bfL^6(\Omega_f))$.
Then H\"{o}lder's inequality implies 
\begin{align*}
\|\bfu_f\cdot\nabla\bfu_f\|_{L^1(0,T;\bfL^{3/2}(\Omega_f)}
&\leq\|\bfu_f\|_{L^2(0,T;\bfL^6(\Omega_f))}\|\bfu_f\|_{L^2(0,T;\bfX_f)}.
\end{align*}
Therefore, $\bfu_f\cdot\nabla\bfu_f\in L^1(0,T;\bfL^{3/2}(\Omega_f))$. 
Then from \eqref{moment}
\[
\nabla\cdot \bfsigma_f=\bff_f-\rho_f\bfudot_f-\bfu_f\cdot\nabla \bfu_f\in L^1(0,T;\bfL^{3/2}(\Omega_f)).
\]
Also 
\[
\bfsigma_f = 2\mu_fD(\bfu_f)-p_fI \in L^1(0,T;\bfL^2(\Omega_f)).
\]
Hence, each row of $\bfsigma_f$ belongs to $L^1(0,T;\bfH^{3/2}(\dive;\Omega_f))$. 
Since $\bfH^1(\Omega_f)$ is dense in $\bfH^{3/2}({\dive};\Omega_f)$, the following Green's formula hold:
\[
\forall \bfv\in \bfH^{3/2}({\dive};\Omega_f), \forall \phi\in H^1(\Omega_f), \langle \bfv\cdot \bfn_{\partial \Omega_f}, \phi \rangle_{\partial \Omega_f}=(\nabla \cdot \bfv, \phi)_{\Omega_f}+(\bfv, \nabla\phi)_{\Omega_f}.
\]
This allows $\bfsigma_f\bfn_{\Gamma}$ to be defined in a weak sense on $\Gamma_I$. See also \cite[p.541]{CGR2013}.
 
Next  since $\bfu\in L^2(0,T;\bfX_f)$, $\dot{\bfeta}\in L^2(0,T;\bfX_p)$ and $p_p\in L^2(0,T;Q_p)$ we have $(-p_p\bfn_{\Gamma}-\sum_{l=1}^{d-1}(\beta(\bfu_f-\bfetadot)\cdot \bft_{\Gamma}^l)\bft_{\Gamma}^l)\Big|_{\Gamma_I}\in L^2(0,T;\bfL^4(\Gamma_I))$
where $\bft_{\Gamma}^1, \bft_{\Gamma}^2$ are the unit tangential vectors on $\Gamma_I$. With these the interface conditions \eqref{inter3} and \eqref{inter4} make sense.

For the rest of the interface conditions we use a global space-time argument in $\mathbb{R}^{d+1}$, $d=2, 3$. We define new variables in $\mathbb{R}^{d+1}$ by $\tilde{\bfx}=(t, x_1, \hdots, x_d)$, $d=2, 3$ and also define the cylindrical region $\tilde{\Omega}_p=[0,T]\times \Omega_p$. 
\subsubsection*{Interface condition \eqref{inter1}.} 
Defining ${\bf \Theta}=(-(s_0p_p+\alpha\nabla\cdot \bfeta), \bfK\nabla p_p)$,  the equation \eqref{mass1} can be written as
\[-\nabla_{\tilde{\bfx}}\cdot{\bf \Theta}=f_p\]
where $\nabla_{\tilde{\bfx}}=(\dfrac{d}{dt},\nabla\cdot)^T$.

Since $f_p\in L^2(\tilde{\Omega}_p)$ and ${\bf \Theta}\in \bfL^2(\tilde{\Omega}_p)$, we have ${\bf \Theta}\in \bfH(\dive;\tilde{\Omega}_p)$. So ${\bf \Theta}\cdot \bfn_{\tilde{\Omega}_p}|_{\partial \tilde{\Omega}_p}$ is well-defined in $H^{-1/2}(\partial \tilde{\Omega}_p)$. Therefore the following Green's formula holds \cite{GR1986}:
\[
\forall \tilde{\bfphi}\in \bfH^1(\tilde{\Omega}_p), \quad (\nabla_{\tilde{\bfx}}\cdot {\bf \Theta}, \tilde{\bfphi})_{\tilde{\Omega}_p}=-({\bf \Theta},\nabla_{\tilde{\bfx}} \tilde{\bfphi})_{\tilde{\Omega}_p}+\langle ({\bf \Theta}\cdot \bfn_{\tilde{\Omega}_p}, \tilde{\bfphi}\rangle_{\partial \tilde{\Omega}_p}
\]
where $\bfn_{\tilde{\Omega}_p}$ outward unit normal to $\tilde{\Omega}_p$. 

This allows the well-definition of $\bfK\nabla p_p\cdot \bfn_{\partial \Omega_p}$ in the weak sense 
 on $\Gamma_I$.
\subsubsection*{Interface condition \eqref{inter2}.}
From the above discussion it is enough to check whether $\bfsigma_p\bfn_{\Gamma}$ is meaningful and we use a similar space-time argument. Defining ${\bf \Sigma}=(-\rho_s \dot{\bfeta}, \bfsigma_p)$, the equation \eqref{structure} can be written as
\[
-\nabla_{\tilde{\bfx}}\cdot{\bf \Sigma}=\bff_s.
\]
Since $\bff_s\in L^2(\tilde{\Omega})$ and $\bfSigma\in \bfL^2(\tilde{\Omega}_p)$ we have ${\bf \Sigma}\in \bfH(\dive;\tilde{\Omega}_p)$. Therefore the following Green's formula holds \cite{GR1986}:
\[
\forall \tilde{\bfphi}\in \bfH^1(\tilde{\Omega}_p), \quad (\nabla_{\tilde{\bfx}}\cdot {\bf \Sigma}, \tilde{\bfphi})_{\tilde{\Omega}_p}=-({\bf \Sigma},\nabla_{\tilde{\bfx}} \tilde{\bfphi})_{\tilde{\Omega}_p}+\langle {\bf \Sigma}\cdot \bfn_{\tilde{\Omega}_p}, \tilde{\bfphi}\rangle_{\partial \tilde{\Omega}_p}.
\]
This allows the well-definition of $\bfsigma_p\bfn_{\Gamma}$ in the weak sense 
 on $\Gamma_I$.

\section*{References}
\bibliographystyle{elsarticle-harv} 
\bibliography{mybibfile}



\end{document}